\newtheorem {theorem} {Theorem}
\newtheorem {proposition} [theorem]{Proposition}
\newtheorem {lemma}  [theorem]{Lemma}
\newtheorem {definition} [theorem]{Definition}
\tikzset{node distance=3cm, auto}
\begin{document}

\title[Resolution of singularities of constrained systems]
{Resolution of singularities of 2-dimensional real analytic constrained differential systems}

\author[O. H. Perez and P. R. da Silva]
{Otavio Henrique Perez and Paulo Ricardo da Silva}

\address{S\~{a}o Paulo State University (Unesp), Institute of Biosciences, Humanities and
	Exact Sciences. Rua C. Colombo, 2265, CEP 15054--000. S. J. Rio Preto, S\~ao Paulo,
	Brazil.}

\email{oh.perez@unesp.br}
\email{paulo.r.silva@unesp.br}

\thanks{ .}

\subjclass[2020]{34A09, 34C08.}

\keywords {Constrained Differential Systems, Impasses, Resolution of Singularities, Blow ups, Newton Polygon.}
\date{}
\dedicatory{}
\maketitle

\begin{abstract}
We present a theorem of resolution of singularities for real analytic constrained differential systems $A(x)\dot{x} = F(x)$ defined on a 2-manifold with corners having impasse set $\{x; \det A(x) = 0\}$. This result can be seen as a generalization of the classical one for 2-dimensional real analytic vector fields. Our proof is based on weighted blow-ups and the Newton polygon.
\end{abstract}

\section{Introduction and main result}

\noindent

A \textbf{$n$-dimensional constrained differential system} (or simply constrained system) in an open set $U\subseteq\mathbb{R}^{n}$ is given by
\begin{equation}\label{eq-def-constrained}
A(\mathbf{x})\dot{\mathbf{x}} = F(\mathbf{x}),
\end{equation}
where $\mathbf{x}\in U\subseteq\mathbb{R}^{n}$, $A:U\rightarrow Mat(\mathbb{R},n)$ and $F:U\rightarrow \mathbb{R}^{n}$ are smooth functions. A point $p$ such that $\det A(p) = 0$ is called \textbf{impasse point}, and the so called \textbf{impasse set} is the set $\Delta = \{p\in U; \det A(p) = 0\}$.

Along this paper, we suppose that $A$ does not have constant rank. We will further suppose that $\det A(\mathbf{x}) = 0$ only in a closed subset of $U$. This implies that the adjoint matrix $A^{*}$ of $A$ (the transpose of the cofactor matrix of $A$) is not identically zero and $AA^{*} = A^{*}A = \det(A)I$ (see Lemma 37.1, \cite{RabierRheinboldt}).

Observe that outside the impasse set a constrained system can be written as
$$\dot{\mathbf{x}} = A(\mathbf{x})^{-1}F(\mathbf{x}),$$
and we have a classical ordinary differential equation.

Multiplying both sides of \eqref{eq-def-constrained} by the adjoint matrix $A^{*}$ of $A$, it can be shown (see Lemma 37.2, \cite{RabierRheinboldt}) that $\gamma$ is solution of \eqref{eq-def-constrained} if, and only if, $\gamma$ is a solution of
\begin{equation}\label{eq-def-constrained-2}
\delta(\mathbf{x})\dot{\mathbf{x}} = A^{*}(\mathbf{x})F(\mathbf{x}),
\end{equation}
where $\delta: U\rightarrow \mathbb{R}$ is the determinant function and $\{\mathbf{x}\in U;\delta(\mathbf{x}) = 0\}$ is the impasse set $\Delta$. The vector field $X(\mathbf{x}) = A^{*}(\mathbf{x})F(\mathbf{x})$ will be called \textbf{adjoint vector field} and a constrained system in the form \eqref{eq-def-constrained-2} will be called \textbf{diagonalized constrained system}.

At an impasse point $p$, since the operator $A(p)$ is not invertible, the classical results on the existence and uniqueness of the solutions break down. However, near such point we can describe the phase portrait as follows. In the open set where $\det A(q) > 0$, the phase portrait is given by the adjoint vector field $X$. On the other hand, in the open set where $\det A(q) < 0$ the phase portrait is given by $-X$.

\begin{definition}\label{def-singularities}
We say that $p\in U$ is a \textbf{singularity of the constrained system} if one of the following conditions is satisfied:
\begin{enumerate}
  \item $p$ is an equilibrium point of the adjoint vector field $X$, that is, $X(p) = 0$;
  \item the impasse set $\Delta$ is not smooth at $p$;
  \item $p$ is a tangency point between $X$ and $\Delta$, that is, $d\delta (X) = 0$.
\end{enumerate}
Otherwise, we say that $p$ is \textbf{non-singular}. See Figures \ref{fig-def-sing} and \ref{fig-def-non-sing}.
\end{definition}

\begin{figure}[h]
  \center{\includegraphics[width=0.75\textwidth]{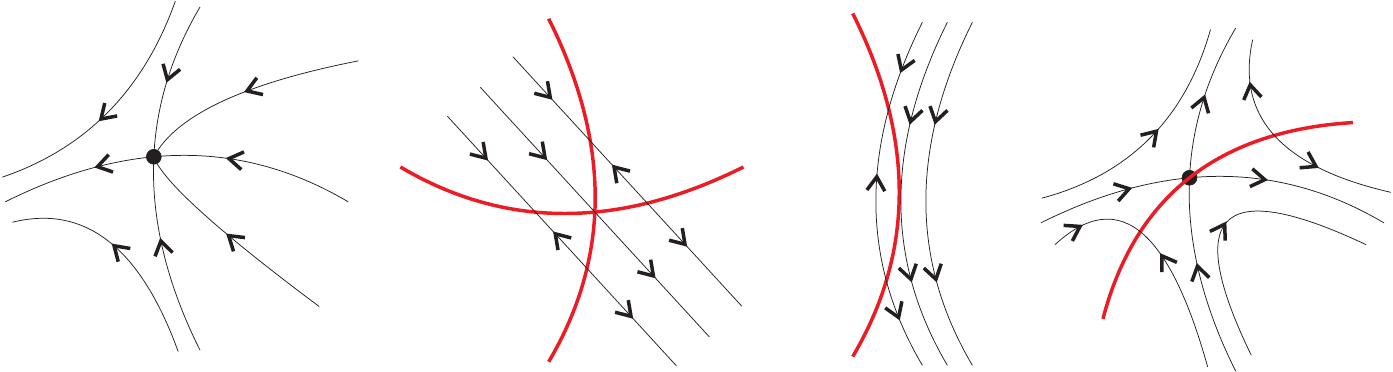}}\\
  \caption{Singularities of a planar constrained system. The impasse set is highlighted in red.}\label{fig-def-sing}
\end{figure}
\begin{figure}[h]
  \center{\includegraphics[width=0.5\textwidth]{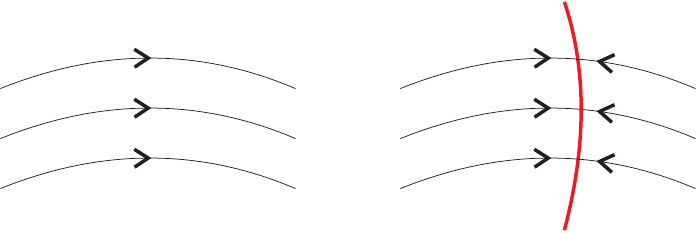}}\\
  \caption{Non-singular points of a planar constrained system. The impasse set is highlighted in red.}\label{fig-def-non-sing}
\end{figure}

From now on we consider 2-dimensional constrained differential systems. We say that an equilibrium point of a vector field is semi-hyperbolic if at least one of the eigenvalues of its linearization has nonzero real part.

\begin{definition}\label{def-desing2}
A 2-dimensional constrained differential system is \textbf{elementary at $p$} if one of the following conditions holds:
\begin{enumerate}
  \item $p$ is a non singular point;
  \item $p$ is a semi-hyperbolic equilibrium point of $X$ and $p\not\in\Delta$;
  \item If $p$ is a semi-hyperbolic equilibrium point of $X$ and $p\in\Delta$, then $\Delta$ coincides with a local separatrix of $X$ at $p$.
\end{enumerate}
See Figure \ref{fig-ementary-points}.
\end{definition}

\begin{figure}[h!]
  \center{\includegraphics[width=0.30\textwidth]{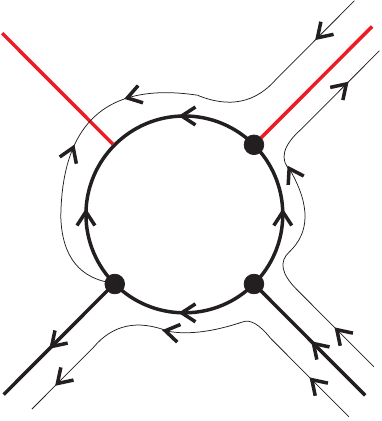}}\\
  \caption{Elementary points of a planar constrained system. The impasse set is highlighted in red.}\label{fig-ementary-points}
\end{figure}

In \cite{SotoZhito,Zhito} the authors studied singularities of constrained systems near a smooth point of the impasse set, in dimensions $n\geq 3$ and $n = 2$, respectively. Here we also consider singular points of the impasse set. Beyond the mathematical interest in such problem, some models from applied sciences may present this kind of singularity (e.g. see Example 1, \cite{Smale}). In \cite{CardinSilvaTeixeira} the authors considered homogeneous blow ups to study singular points of the impasse set of real polynomial constrained systems whose impasse set is defined by a homogeneous polynomial, among others non-degeneracy conditions. In the present work we are going to generalize these results by proving a theorem of resolution of singularities for real analytic planar constrained systems without such non-degeneracy assumptions.

In order to state our main theorem, we need to work in a more general category of analytic manifolds with corners. Roughly speaking, a 2-dimensional manifold with corners is a topological space locally modeled by open sets of $(\mathbb{R}_{\geq 0})^{2}$. We refer to \cite{Joyce} for a detailed discussion on this subject. As we will see in the discussion below, after a blow-up in a singularity we obtain a new ambient space, which is a manifold with corners. Throughout this paper, $\mathcal{M}$ is a 2-dimensional real analytic manifold with corners. See Figure \ref{fig-manifold-with-corners}.

\begin{figure}[h!]
  \center{\includegraphics[width=0.55\textwidth]{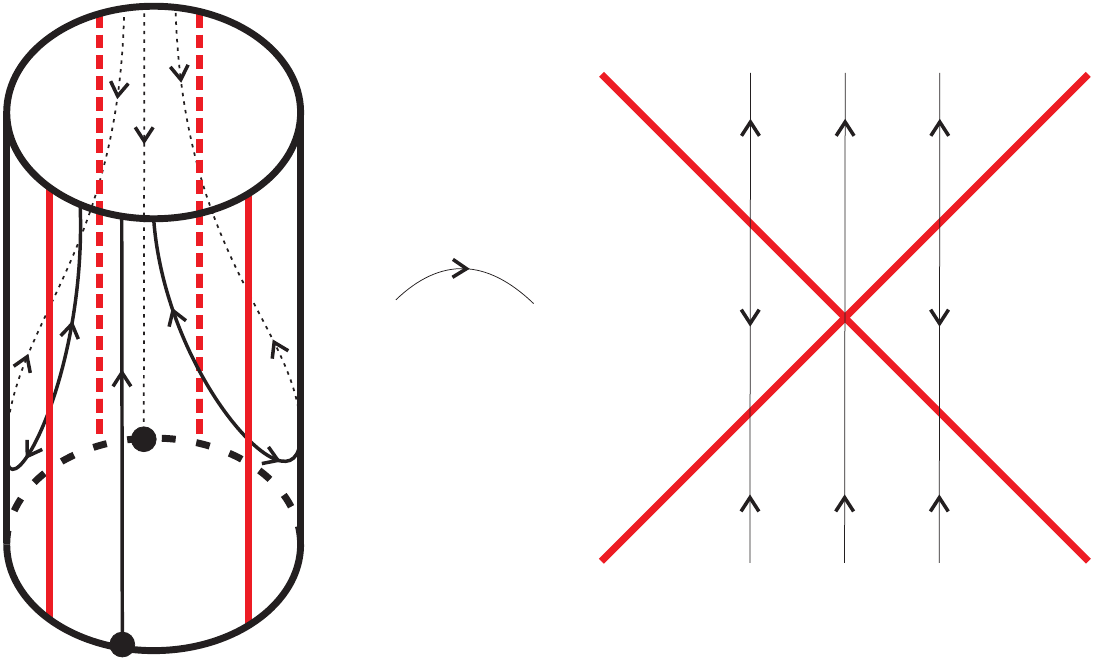}}\\
  \caption{Constrained differential system in a 2-manifold with corners after a blow-up.}\label{fig-manifold-with-corners}
\end{figure}

A \textbf{1-dimensional analytic oriented foliation} on $\mathcal{M}$ is a collection of pairs $\mathcal{X} = \{(U_{i}, X_{i})\}_{i\in I}$ such that $\{U_{i}\}_{i\in I}$ is an open covering of $\mathcal{M}$, $X_{i} = (P_{i},Q_{i})$ is a real analytic vector field defined in $U_{i}$ and, for each $i,j\in I$, there is a strictly positive function $u_{i,j}$ such that $X_{i} = u_{i,j}X_{j}$ on $U_{i}\cap U_{j}$. We further suppose that $\partial\mathcal{M}$ is invariant by the foliation and $P_{i}$ and $Q_{i}$ do not have common factor, which implies that equilibrium points of $X_{i}$ are isolated.

We define the \textbf{principal ideal sheaf} $\mathcal{I}$ as the collection of pairs $\mathcal{I} = \{(U_{i}, \delta_{i})\}_{i\in I}$ such that $\{U_{i}\}_{i\in I}$ is an open covering of $\mathcal{M}$, $\delta_{i}$ is an irreducible analytic function defined in $U_{i}$ and, for each $i,j\in I$, there is a strictly positive function $v_{i,j}$ such that $\delta_{i} = v_{i,j}\delta_{j}$ on $U_{i}\cap U_{j}$.

A constrained differential system defined in a 2-dimensional manifold with corners is a triple $(\mathcal{M}, \mathcal{X}, \mathcal{I})$, where $\mathcal{M}$ is a 2-dimensional real analytic manifold with corners, $\mathcal{X}$ is a 1-dimensional analytic oriented foliation on $\mathcal{M}$ and $\mathcal{I}$ is the principal ideal sheaf. On each $U_{i}$ of the open covering, we associate the diagonalized constrained differential system
$$\delta_{i}(\mathbf{x})\dot{\mathbf{x}} = X_{i}(\mathbf{x}).$$

For shortness we say that $\mathcal{I}$ is the \textbf{impasse ideal} of the constrained system and its zero set $\Delta$ (locally given by $\{\delta_i = 0\}$) is the impasse set. A straightforward computation shows that the phase curves of the constrained system coincide in the intersection $U_{i}\cap U_{j}$ of two open sets of the covering.

Singularities and elementary points are defined in the same way as Definitions \ref{def-singularities} and \ref{def-desing2}. We say that \textbf{$(\mathcal{M},\mathcal{X},\mathcal{I})$ is elementary} if $(\mathcal{M},\mathcal{X},\mathcal{I})$ is elementary at every $p\in \mathcal{M}$.

Now we are able to state the main theorem.

\begin{theorem}\label{teo-resolution-singularities}
Let $(\mathcal{M}, \mathcal{X}, \mathcal{I})$ be a 2-dimensional real analytic constrained differential system defined on a compact manifold with corners. Then there is a finite sequence of weighted blow ups
$$(\widetilde{\mathcal{M}}, \widetilde{\mathcal{X}}, \widetilde{\mathcal{I}}) = (\mathcal{M}_{n}, \mathcal{X}_{n}, \mathcal{I}_{n}) \xrightarrow{\Phi_{n}} \cdots \xrightarrow{\Phi_{0}} (\mathcal{M}_{0}, \mathcal{X}_{0}, \mathcal{I}_{0}) = (\mathcal{M}, \mathcal{X}, \mathcal{I})$$
such that $(\widetilde{M}, \widetilde{X}, \widetilde{\mathcal{I}})$ is elementary.
\end{theorem}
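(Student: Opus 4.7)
The approach is to adapt the classical Seidenberg--Dumortier desingularization theorem for planar analytic vector fields to the coupled data $(\mathcal{X},\mathcal{I})$. By compactness of $\mathcal{M}$, it suffices to resolve each non-elementary point by finitely many local weighted blow-ups. The hypothesis that $P_i$ and $Q_i$ share no common factor makes the equilibria of $X$ isolated, and irreducibility of each local $\delta_i$ makes the singular points of $\Delta$ isolated; moreover, tangency points $d\delta(X)=0$ at smooth points of $\Delta$ are isolated except along branches where $\delta$ divides $d\delta(X)$, i.e.\ along $X$-invariant components of $\Delta$, which after resolution become components of the exceptional divisor and fall under case (3) of Definition \ref{def-desing2}. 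Thus one reduces to finitely many bad points.

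The core step is to introduce a well-ordered local invariant $\nu(p)$, extracted from the Newton polygon of the pair $(X,\delta)$ at $p$ in adapted coordinates, that strictly decreases under an appropriately chosen weighted blow-up. I would build $\nu(p)$ lexicographically from the orders of vanishing of $\delta$ and of $X$ at $p$, the height and length of the Newton polygon of the coupled object $\delta^{-1}(Q\,dx-P\,dy)$ --- a logarithmic $1$-form along $\Delta$ --- and combinatorial data attached to the principal parts on its compact edges. To each compact edge of slope $-q/p$ one attaches the weighted blow-up of type $(p,q)$ centered at $p$, and a direct chart-by-chart calculation shows that such a blow-up eliminates the corresponding principal part, yielding a strictly smaller Newton polygon at every point of the exceptional divisor. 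When the polygon collapses to a single vertex, a standard homogeneous blow-up is used instead. Well-orderedness of $\nu$ then forces termination.

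At the minimum of $\nu$, the point $p$ must be elementary: the three clauses of Definition \ref{def-desing2} correspond exactly to the minimal values --- non-singular points (case (1)); semi-hyperbolic equilibria off $\Delta$ (case (2)); and semi-hyperbolic equilibria on $\Delta$ whose local separatrix coincides with the surviving branch of $\Delta$ (case (3)), which after sufficiently many blow-ups is necessarily a component of the exceptional divisor and hence $X$-invariant. Combined with the finiteness of the initial non-elementary set, this completes the proof.

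The main technical obstacle will be designing $\nu$ so that a \emph{single} weighted blow-up, read off from the Newton polygon of the pair, simultaneously decreases the combined complexity of $X$ and of $\delta$. Resolving $\Delta$ first by classical curve desingularization and then applying Seidenberg--Dumortier to $X$ risks reintroducing tangencies or inflating the multiplicity of $X$ along the newly created exceptional divisor; conversely, resolving $X$ first may ruin the normal crossings structure of $\Delta$. Weighted (rather than purely homogeneous) blow-ups adapted to the Newton polygon of the coupled data are precisely what permit a synchronized decrease, and verifying this decrease in every chart --- together with matching the minimum of $\nu$ with the three cases of Definition \ref{def-desing2}, in particular forcing one eigendirection of a surviving semi-hyperbolic equilibrium on $\Delta$ to coincide with $\Delta$ itself --- is where most of the work lies.
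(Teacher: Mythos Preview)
Your overall architecture is the same as the paper's: reduce to finitely many bad points by compactness and analyticity, then run a local Newton-polygon-driven weighted blow-up procedure with a decreasing invariant. The paper's invariant is simply the \emph{height} of the Newton polygon of the auxiliary vector field $X_A=\delta X$, and the terminal condition is that the height drops to $\leq 0$ or the main segment becomes horizontal (Theorem~\ref{teo-eq-definitions}). Your choice of the logarithmic $1$-form $\delta^{-1}(Q\,dx-P\,dy)$ in place of $\delta X$ is a reasonable variant, and the lexicographic invariant you sketch could in principle be made to work along the same lines.

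There is, however, a genuine gap. You assert that ``a direct chart-by-chart calculation shows that such a blow-up \dots\ yields a strictly smaller Newton polygon at every point of the exceptional divisor.'' This is false in general, and it is precisely the obstruction the paper spends most of its effort on. After the blow-up in the $x$-direction the height drops at the origin of the divisor (Lemma~\ref{lemma-blow-up-x-direction}), but Proposition~\ref{prop-after-blow-up} shows there can be a (unique) translated point $(0,\xi_0)$ on the divisor at which the height is \emph{unchanged}. Proposition~\ref{prop-before-blow-up-expressions} identifies the special form the first graduation must then take, and Proposition~\ref{prop-change-coord-infty} shows that one must precompose with an \emph{admissible change of coordinates} $y\mapsto y-\xi_0 x^{\omega_2/\omega_1}$ (possibly iterated, and in the non-equilibrium case possibly an infinite analytic composition converging because its graph lies in the analytic curve $\Delta$) before the next blow-up actually decreases the height. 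Without this preparation step your induction does not terminate: the example~\eqref{ex-resolution-height-case-1} in the paper exhibits exactly a point on the divisor where the height stays at $r$ after the first blow-up.

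A secondary issue: your remark that $X$-invariant branches of $\Delta$ ``after resolution become components of the exceptional divisor'' is not correct --- strict transforms of $\Delta$ are not exceptional components. The paper handles invariant branches differently, by declaring the configuration Newton elementary whenever the main segment is horizontal (Definition~\ref{def-desing1}), which via Theorem~\ref{teo-eq-definitions} is exactly case~(3) of Definition~\ref{def-desing2}. Your termination analysis should terminate at this configuration rather than attempt to absorb $\Delta$ into the divisor.
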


We will give the proof of this theorem in Section \ref{sec-global-resolution}.

The first theorem of resolution of singularities for 2-dimensional real analytic vector fields was stated by Bendixson \cite{Bendixson}, and its first proof was given by Seidenberg \cite{Seidenberg}. Such result was generalized by Dumortier \cite{Dumortier} for real $C^{\infty}$ planar vector fields. Brunella and Miari \cite{BrunellaMiari} considered weighted blow-ups to resolve isolated singularities of 2-dimensional real analytic vector fields under some non degeneracy conditions. Pelletier \cite{Pelletier} gave a proof dropping such non degeneracy conditions, and in the same paper the author also estimates the number of weighted blow-ups needed to resolve a singularity. The proof of resolution of singularities for 3-dimensional real analytic vector fields was given by Panazzolo \cite{Panazzolo}. Just as in \cite{BelottoPanazzolo, Panazzolo, Pelletier}, weighted blow-ups and the Newton polygon are our main tools.

Resolution of singularities is an important tool in the study of equilibrium points of real planar vector fields (see \cite{BrunellaMiari,Dumortier} for instance). Following this direction, the techniques presented here will be useful in the topological classification of phase portraits of constrained differential systems near a singularity. Indeed, such study is the next step once the theorem of resolution of singularities is proved (see \cite{PerezSilva}).

The paper is organized as follows. In Section \ref{sec-main-tools} we present some preliminary definitions necessary for the development of the study. In Section \ref{sec-properties} the Newton polygon is considered in the context of constrained systems and we find suitable coordinate systems that will be useful in the proof of the main theorem. In Section \ref{sec-local-strategy} we given a detailed description of the algorithm of resolution of singularities of planar constrained differential systems. Finally, in Section \ref{sec-global-resolution} we prove Theorem \ref{teo-resolution-singularities}.

\section{Main tools}\label{sec-main-tools}
\noindent

In this section we present the tools used in the resolution of singularities of planar constrained differential systems. The proof of the main theorem is based on weighted blow-ups and the weights are chosen by means of the Newton polygon. For our knowledge, it is not defined in the literature such a polygon for constrained systems. The idea is to associate an analytic vector field to the constrained differential system (the so called auxiliary vector field) and then we study the Newton polygon of such vector field. In the end of this section, an example is presented in order to explain all the tools defined.

Firstly, we recall the notion of orbital equivalence presented in \cite{SotoZhito}. Two constrained systems $\delta_{i}(\mathbf{x})\dot{\mathbf{x}} = X_{i}(\mathbf{x})$, $i = 1,2,$  are \textbf{orbitally equivalent at the points $p_{i}\in\Delta_{i}$}, $i = 1,2$, if there are two open sets $V_{1}\ni p_{1}$ and $V_{2}\ni p_{2}$ and an analytic diffeomorphism $\Phi: V_{1}\rightarrow V_{2}$ such that
\begin{enumerate}
  \item $\Phi$ maps the impasse set $\Delta_{1}$ to the impasse set $\Delta_{2}$;
  \item $\Phi$ maps the phase curves of $X_{1}$ in $V_{1}\backslash\Delta_{1}$ to the phase curves of $X_{2}$ in $V_{2}\backslash\Delta_{2}$, not necessarily preserving orientation.
\end{enumerate}
In the case where $\Phi$ preserves orientation, we say that $\Phi$ is an \textbf{orientation preserving orbital equivalence}.

\subsection{Quasi-homogeneous blow-up}

\noindent

In what follows we introduce weighted blow-ups (or quasi-homogeneous blow ups) following the definitions presented in \cite{Panazzolo} for blow-ups in $n$-dimensional manifolds. For a good introduction of the method for 2-dimensional vector fields, we refer to \cite{AlvarezFerragutJarque, DumortierLlibreArtes}. Roughly speaking, blowing-up a singularity means to replace a point by a compact set, which gives us a new ambient space. In this new compact set, new (possibly less degenerated) singularities may appear.

Let $(\mathcal{M}, \mathcal{X}, \mathcal{I})$ be a 2-dimensional real analytic constrained system and fix local coordinates of such triple. A \textbf{weight-vector} is a non-zero vector $\omega = (\omega_{1}, ..., \omega_{n})$ of positive integers. A \textbf{quadratic $\omega$-blow-up} is a real analytic surjective map

\begin{center}
\begin{tabular}{lccc}

  $\Phi_{\omega}:$ & $\widetilde{M} = \mathbb{S}^{n-1}\times\mathbb{R}_{\geq 0}$ & $\rightarrow$ & $\mathbb{R}^{n}$, \\
   & $(\bar{x},\tau)$ & $\mapsto$ & $\tau^{\omega}\bar{x} = (\tau^{\omega_{1}}\bar{x}_{1},...,\tau^{\omega_{n}}\bar{x}_{n})$, \\

\end{tabular}
\end{center}
where $\mathbb{S}^{n-1} = \{\bar{x}\in\mathbb{R}^{n};\sum_{i=1}^{n}\bar{x}_{i}^{2} = 1\}$. The set $\Omega = \{(0,...,0)\}$ is called \textbf{blow-up center} and $\Phi_{\omega}^{-1}(\Omega) = \mathbb{S}^{n-1}\times\{0\} = \mathbb{E}$ is called \textbf{exceptional divisor}.

Notice that $\Phi_{\omega}|_{\widetilde{M}\backslash \mathbb{E}}$ is an analytic diffeomorphism that maps its domain onto $\mathbb{R}^{n}\backslash\Omega$. If we blow up a point outside the boundary $\partial \mathcal{M}$, then $\mathbb{E}$ is the boundary of $\widetilde{\mathcal{M}}$. On the other hand, if we blow up a point on $\partial \mathcal{M}$, then we previously have a divisor $\mathcal{D}$ on $\mathcal{M}$ and the boundary of $\widetilde{\mathcal{M}}$ is given by $\widetilde{\mathcal{D}} = \Phi^{-1}_{\omega}(\mathcal{D})\cap\mathbb{E}$.

Given a 1-dimensional analytic oriented foliation $\mathcal{X}$ on $\mathcal{M}$, the \textbf{pull back of $\mathcal{X}$} is the 1-dimensional analytic oriented foliation $\widetilde{\mathcal{X}} = \Phi_{\omega}^{*}\mathcal{X}$ defined in $\widetilde{\mathcal{M}}$, which is orientation preserving orbitally equivalent to $\mathcal{X}$ outside $\mathbb{E}$. Analogously, the \textbf{pull back of $\mathcal{I}$} is the impasse ideal $\widetilde{\mathcal{I}} = \Phi_{\omega}^{*}\mathcal{I}$ defined in $\widetilde{\mathcal{M}}$ whose generators are diffeomorphic to that ones of $\mathcal{I}$ outside $\mathbb{E}$. The \textbf{strict transformed of $(\mathcal{M},\mathcal{X},\mathcal{I})$} as the triple $(\widetilde{\mathcal{M}},\widetilde{\mathcal{X}},\widetilde{\mathcal{I}})$. It can be easily checked that outside the exceptional divisor the triples $(\mathcal{M},\mathcal{X},\mathcal{I})$ and $(\widetilde{\mathcal{M}},\widetilde{\mathcal{X}},\widetilde{\mathcal{I}})$ are orbitally equivalent orientation preserving.

In many situations, it is more practical to consider directional charts instead the quadratic blow up. Take the weight vector $\omega = (\omega_{1}, ..., \omega_{n})$ and $\sigma\in\{+,-\}$. Consider the set $V_{i}^{\sigma}$
$$V_{i}^{\sigma} = \{(\bar{x},\tau)\in\mathbb{S}^{n-1}\times\mathbb{R}_{\geq0}; \sigma x_{i} > 0\},$$
take the projection

\begin{center}
\begin{tabular}{lccc}

  $\Pi_{i}^{\sigma}:$ & $V_{i}^{\sigma}$ & $\rightarrow$ & $\mathbb{R}^{n-1}\times\mathbb{R}_{\geq0}$, \\
   & $(\bar{x},\tau)$ & $\mapsto$ & $(\bar{x}_{1},...,\bar{x}_{i-1},\tau,\bar{x}_{i+1},...,\bar{x}_{n})$, \\

\end{tabular}
\end{center}
and define the map

\begin{center}
\begin{tabular}{lccc}

  $\Phi_{i,\omega}^{\sigma}:$ & $\mathbb{R}^{n-1}\times\mathbb{R}_{\geq0}$ & $\rightarrow$ & $\mathbb{R}^{n}$, \\
   & $(\tilde{x}_{1},...,\tilde{x}_{n})$ & $\mapsto$ & $(\tilde{x}_{i}^{\omega_{1}}\tilde{x}_{1}, ..., \tilde{x}_{i}^{\omega_{i-1}}\tilde{x}_{i-1},\sigma\tilde{x}_{i}^{\omega_{i}},\tilde{x}_{i}^{\omega_{i+1}}\tilde{x}_{i+1},...,\tilde{x}_{i}^{\omega_{n}}\tilde{x}_{n})$. \\

\end{tabular}
\end{center}

The \textbf{$x_{i}$-directional chart} is given by the pairs $(V_{i}^{\sigma},\Pi_{i}^{\sigma})$ and the \textbf{blow-up in the $x_{i}$ direction} is given by $\Phi_{i,\omega}^{\sigma}$. It can be shown that Diagram \ref{fig-def-r-eq} is commutative.

\begin{figure}[h!]
\begin{flushright}
\begin{center}
\begin{tikzpicture}
\node (A) {$V_{i}^{\sigma}$};
\node (B) [right of=A] {$\mathbb{R}^{n}\cap\{\sigma x_{i}\geq 0\}$};
\node (C) [below of=A] {$U$};
\node (D) [below of=B] {$\mathbb{R}^{n}\cap\{\sigma x_{i}\geq 0\}$};
\large\draw[->] (A) to node {\mbox{{\footnotesize $\Phi_{\omega}$}}} (B);
\large\draw[->] (A) to node {\mbox{{\footnotesize $\Pi^{\sigma}_{i}$}}} (C);
\large\draw[->] (B) to node {\mbox{{\footnotesize $id$}}} (D);
\large\draw[->] (C) to node {\mbox{{\footnotesize $\Phi^{\sigma}_{i,\omega}$}}} (D);
\end{tikzpicture}
\end{center}
\end{flushright}
\caption{Equivalence between directional and quadratic blow-ups.}\label{fig-def-r-eq}
\end{figure}

The pull back of $\mathcal{X}$, the pull back of $\mathcal{I}$ and the strict transformed $(\widetilde{\mathcal{M}},\widetilde{\mathcal{X}},\widetilde{\mathcal{I}})$ of $(\mathcal{M},\mathcal{X},\mathcal{I})$ are defined in the same way as before.

\subsection{The Newton polygon}
\noindent

A question that naturally rises is how we chose the weight vector of the weighted blow-up. In order to make a good choice for the resolution of singularities, we utilize the Newton polygon associated to a 2-dimensional analytic vector field. In this subsection we briefly recall the construction of such mathematical object, which can also be found in \cite{AlvarezFerragutJarque,DumortierLlibreArtes, Pelletier}.

We say that a polynomial $P(x,y) = \displaystyle\sum p_{r,s}x^{r}y^{s}$ is \textbf{quasi-homogeneous of degree $d$} if there is a vector of positive integers $\omega = (\omega_{1},\omega_{2})$ such that $P(t^{\omega_{1}}x,t^{\omega_{2}}y) = t^{d}P(x,y)$. We say that $d$ is the \textbf{degree of quasi-homogeneity}.

Let $X$ be an analytic vector field. Just as in \cite{Panazzolo}, we write $X$ in the so called logarithmic basis. More precisely, consider
$$X(x,y) = P(x,y)\displaystyle\frac{\partial}{\partial x} + Q(x,y)\displaystyle\frac{\partial}{\partial y},$$
where
$$P(x,y) = \Big{(}\sum a_{m,n}x^{m}y^{n}\Big{)}x, \ Q(x,y) = \Big{(}\sum b_{m,n}x^{m}y^{n}\Big{)}y,$$
with $a_{m,n},b_{m,n}\in\mathbb{R}$ and $m,n\in \mathbb{Z}$ satisfying:
\begin{enumerate}
  \item For $m < -1$ or $n \leq -1$, $a_{m,n} = 0$;
  \item For $m \leq -1$ or $n < -1$, $b_{m,n} = 0$.
\end{enumerate}

We recall that a diagonalized constrained system is written in the form
\begin{equation}\label{eq-impasse}
    \delta(x,y)\dot{x} = P(x,y), \ \delta(x,y)\dot{y} = Q(x,y),
\end{equation}
with $\delta (x,y)$ being an irreducible real-analytic function that can be written as
$$\delta(x,y) = \sum c_{k,l}x^{k}y^{l},$$
where $c_{k,l}\in\mathbb{Z}$ is such that $c_{k,l} = 0$ when $k < 0$ or $l<0$.

Let $\omega = (\omega_{1},\omega_{2})$ be a vector of positive integers. One can write the planar vector field $X$ as
\begin{equation}\label{def-graduation}
X(x,y) = \displaystyle\sum_{d = -1}^{\infty}X_{d}^{(\omega_{1},\omega_{2})}(x,y),
\end{equation}
where
\begin{equation}\label{def-d-level-graduation}
X_{d}^{(\omega_{1},\omega_{2})}(x,y) = \displaystyle\sum_{\omega_{1} r + \omega_{2} s = d} x^{r}y^{s}\Big{(}a_{r,s}x\displaystyle\frac{\partial}{\partial x} +  b_{r,s}y\displaystyle\frac{\partial}{\partial y}\Big{)}.
\end{equation}

In other words, we are writing the vector field $X$ as a sum of quasi-homogeneous components. We say that \eqref{def-graduation} is a \textbf{$(\omega_{1},\omega_{2})$-graduation of $X$}. Each $X_{d}^{(\omega_{1},\omega_{2})}$ is called \textbf{$d$-level of the $(\omega_{1},\omega_{2})$-graduation}.

Given a $(\omega_{1},\omega_{2})$-graduation of $X$, we associate the monomials $a_{r,s}x^{r}y^{s}$ and $b_{r,s}x^{r}y^{s}$ with nonzero coefficients to a point $(r,s)$ in the plane of powers. Observe that each point $(r,s)$ is contained in a line of the form $\{\omega_{1} r + \omega_{2} s = d\}$. Furthermore, notice that the map that relates a monomial with a point $(r,s)$ is not injective.

\begin{definition}
The \textbf{support $\mathcal{Q}$ of $X$} is the set
$$\mathcal{Q} = \{(r,s)\in \mathbb{Z}^{2}; a_{r,s}^{2} + b_{r,s}^{2} \neq 0\}.$$
\end{definition}

\begin{definition}
The \textbf{Newton polygon $\mathcal{P}$} associated to the analytic vector field $X$ is the convex envelope of the set $\mathcal{Q} + \mathbb{R}^{2}_{+}$.
\end{definition}

The boundary of the Newton polygon $\partial\mathcal{P}$ is the union of a finite number of segments. We enumerate them from the left to the right: $\gamma_{0}, \gamma_{1}, ..., \gamma_{n+1}$. Observe that $\gamma_{0}$ is vertical and $\gamma_{n+1}$ is horizontal. Analogously, the non-regular points of $\partial\mathcal{P}$ will be enumerated from the left to the right: $v_{0}, ..., v_{n}$.

\begin{definition}
We say that $v_{0}, ..., v_{n}$ are the \textbf{vertices of the Newton polygon}. The vertex $v_{0} = (r_{0}, s_{0})$ is called \textbf{main vertex} and the segment $\gamma_{1}$ will be called \textbf{main segment}. The number $s_{0}$ is called \textbf{height of the Newton polygon}. See Figure \ref{fig-def-newton-polygon}.
\end{definition}

\begin{figure}[h]
  \center{\includegraphics[width=0.30\textwidth]{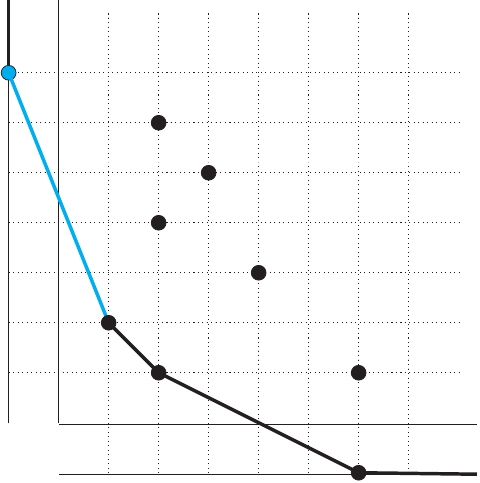}}\\
  \caption{Newton polygon of a vector field. The main vertex and main segment are highlighted in blue.}\label{fig-def-newton-polygon}
\end{figure}

The main segment $\gamma_{1}$ is contained in an affine line of the form $\{r\omega_{1} + s\omega_{2} = R\}$. Observe that the vector $\omega = (\omega_{1},\omega_{2})$ is normal with respect to $\{r\omega_{1} + s\omega_{2} = R\}$. In the resolution of singularities of analytic vector fields, one chooses the vector $\omega$ as the weight vector (see \cite{AlvarezFerragutJarque,Pelletier}).

The Newton polygon strongly depends on the coordinate system adopted. It means that applying a change of coordinates on a vector field, we possibly obtain a different Newton polygon and it may be an obstacle during the resolution process. Later on, we will see how to solve this problem.

\subsection{The auxiliary vector field}
\noindent

Consider the diagonalized constrained system given by \eqref{eq-impasse}, that is,
$$\delta(x,y)\dot{x} = P(x,y), \ \delta(x,y)\dot{y} = Q(x,y).$$

\begin{definition} The \textbf{auxiliary vector field} $X_{A}$ associated to the constrained differential equation \eqref{eq-impasse} is the vector field
\begin{equation}\label{def-aux-vec-field}
\scriptstyle X_{A}(x,y) = \delta(x,y)\Bigg{(}P(x,y)\frac{\partial}{\partial x} + Q(x,y)\frac{\partial}{\partial y}\Bigg{)} = \Bigg{(}\sum c_{k,l}x^{k}y^{l}\Bigg{)}\Bigg{(}\sum x^{m}y^{n}\big{(}a_{m,n}x\frac{\partial}{\partial x} + b_{m,n}y\frac{\partial}{\partial y}\big{)}\Bigg{)},
\end{equation}
which is real analytic.
\end{definition}

It is easy to sketch the phase portrait of the auxiliary vector field. Outside the impasse set, the auxiliary vector field \eqref{def-aux-vec-field} is obtained by multiplying the constrained differential system \eqref{eq-impasse} by the positive function $\delta^{2}$. On the other hand, the impasse set $\Delta$ is a curve of equilibrium points for \eqref{def-aux-vec-field}.

Since the auxiliary vector field is analytic, all previous definitions and remarks concerning the Newton polygon remain true for \eqref{def-aux-vec-field}. However, observe that the points on the plane of powers are of the form $(k+m, l + n)$, and therefore the support $\mathcal{Q}$ takes form
$$\mathcal{Q} = \{(k+m,l+s)\in \mathbb{Z}^{2}; c_{k,l}(a_{r,s}^{2} + b_{r,s}^{2}) \neq 0\}.$$

Moreover, the levels of a $(\omega_{1},\omega_{2})$-graduation are written in the form
$$X_{A,d}^{(\omega_{1},\omega_{2})}= \Bigg{(} \displaystyle\sum_{\omega_{1} k + \omega_{2} l = d_{1}} c_{k,l}x^{k}y^{l}  \Bigg{)} \Bigg{(}   \displaystyle\sum_{\omega_{1} m + \omega_{2}n = d_{2}} x^{m}y^{n}\Big{(}a_{m,n}x\displaystyle\frac{\partial}{\partial x} +  b_{m,n}y\displaystyle\frac{\partial}{\partial y}\Big{)}\Bigg{)},$$
where $d_{1} + d_{2} = d$. In other words, the support of the auxiliary vector field is obtained by the Minkowski Sum \cite{Schneider} between the points of the support of the adjoint vector field $X$ and the points of the support of $\delta$. Finally, we remark that the Newton polygon of the adjoint vector field and the auxiliary vector field are not necessarily the same.

For our knowledge, it does not exist a definition of Newton polygon for constrained systems. In order to solve this problem, we associate a planar constrained differential system to its auxiliary vector field $X_{A}$, and then we utilize the Newton polygon of $X_{A}$. Furthermore, the main advantage of writing the auxiliary vector this way is that it turns our computations easier, and it will be easy to see what is happening with the impasse set and the adjoint vector field during the resolution process.

\subsection{An example}
\noindent

In what follows we exemplify the concepts presented. Consider the following constrained differential equation
\begin{equation}\label{exe-cusp-fold}
xy\dot{x} = y, \ xy\dot{y} = x^{2},
\end{equation}
whose auxiliary vector field is given by
\begin{equation}\label{exe-cusp-fold-auxiliary}
X_{A}(x,y) = (xy)y\displaystyle\frac{\partial}{\partial x} + (xy)x^{2}\displaystyle\frac{\partial}{\partial y}.
\end{equation}

The support of \eqref{exe-cusp-fold-auxiliary} is the set $\mathcal{Q} = \{(0,2), (3,0)\}$. Therefore, the main vertex $v_{0}$ of the Newton polygon associated to \eqref{exe-cusp-fold-auxiliary} is $(0,2)$, the height of the polygon is equals $2$ and the main segment $\gamma_{1}$ is contained in the line $\{2r + 3s = 6\}$. See Figure \ref{fig-exe-cusp-fold}.

\begin{figure}[h]
  \center{\includegraphics[width=0.30\textwidth]{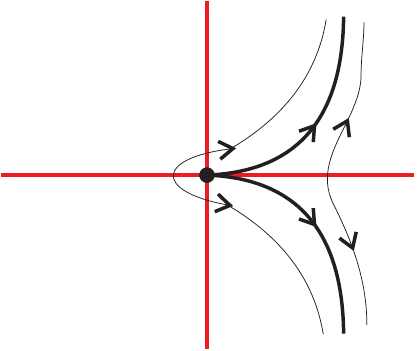}\hspace{1cm}\includegraphics[width=0.30\textwidth]{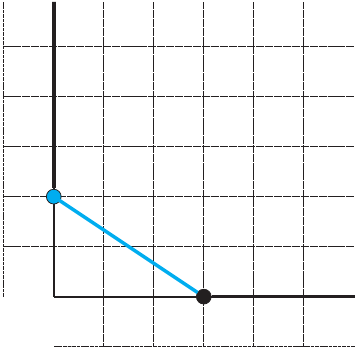}}\\
  \caption{Phase portrait of \eqref{exe-cusp-fold} (left) and Newton polygon of \eqref{exe-cusp-fold-auxiliary} (right). The impasse set is highlighted in red. The main vertex and the main segment of the polygon are highlighted in blue.}\label{fig-exe-cusp-fold}
\end{figure}

In the proof of the main theorem we consider directional blow ups. The weight vector will be that one that is normal to the affine line $\{2r + 3s = 6\}$. In this example, such weight vector is $\omega = (2,3)$. We will only make the computations in the positive $x$ and $y$ charts, because in the negative charts the computations are analogous.

Our transformations are of the form
$$(x,y) = (\tilde{x}^{2},\tilde{x}^{3}\tilde{y}), \ (x,y) = (\bar{y}^{2}\bar{x},\bar{y}^{3}),$$
in the $x$ and $y$ directions, respectively. In the $x$-chart, the auxiliary vector field takes the form
$$\widetilde{X}_{A} = (\tilde{x}^{5}\tilde{y})\tilde{x}\displaystyle\frac{\tilde{y}}{2}\tilde{x}\displaystyle\frac{\partial}{\partial\tilde{x}} + (\tilde{x}^{5}\tilde{y})\tilde{x}\Big{(}\tilde{y}^{-1} - \displaystyle\frac{3}{2}\tilde{x}\tilde{y}\Big{)}\tilde{y}\displaystyle\frac{\partial}{\partial \tilde{y}},$$
and then multiplying such expression by $\tilde{x}^{-6}$ we have
$$\widetilde{X}_{A} = (\tilde{y})\displaystyle\frac{\tilde{y}}{2}\tilde{x}\displaystyle\frac{\partial}{\partial\tilde{x}} + (\tilde{y})\Big{(}1 - \displaystyle\frac{3}{2}\tilde{y}^{2}\Big{)}\displaystyle\frac{\partial}{\partial \tilde{y}},$$
which implies that we have the following constrained system in the positive $x$-direction:
$$\tilde{y}\dot{\tilde{x}} = \displaystyle\frac{\tilde{x}\tilde{y}}{2}, \ \tilde{y}\dot{\tilde{y}} = 1 - \displaystyle\frac{3\tilde{y}^{2}}{2}.$$

In this chart, the points $(0,\pm\sqrt{\frac{2}{3}})$ are hyperbolic saddles of the adjoint vector field and the impasse set intersects the exceptional divisor $\{x = 0\}$ at the origin. It is also interesting to observe what happens to the Newton polygon of the auxiliary vector field \eqref{exe-cusp-fold-auxiliary} when we blow-up the origin in this direction. The main segment $\gamma_{1}$ is sent to the vertical segment $\widetilde{\gamma}_{1}$ contained in the line $\{r = 6\}$ of the $rs$-plane. When we multiply the auxiliary vector field by $\tilde{x}^{-6}$, we translate the vertical segment $\widetilde{\gamma}_{1}$ to the $s$-axis. See Figure \ref{fig-exe-newton-cusp-dobra-x}.

Analogously, in the positive $y$-chart the auxiliary vector field \eqref{exe-cusp-fold-auxiliary} takes the form
$$\overline{X}_{A} = (\bar{x}\bar{y}^{5})\bar{y}\Big{(}\bar{x}^{-1} - \displaystyle\frac{2}{3}\bar{x}^{2}\Big{)}\bar{x}\displaystyle\frac{\partial}{\partial\bar{x}} + (\bar{x}\bar{y}^{5})\bar{y}\displaystyle\frac{\bar{x}^{2}\bar{y}}{3}\bar{y}\displaystyle\frac{\partial}{\partial \bar{y}},$$
and then multiplying such expression by $\bar{y}^{-6}$ we have
$$\overline{X}_{A} = (\bar{x})\Big{(}\bar{x}^{-1} - \displaystyle\frac{2}{3}\bar{x}^{2}\Big{)}\bar{x}\displaystyle\frac{\partial}{\partial\bar{x}} + (\bar{x})\displaystyle\frac{\bar{x}^{2}\bar{y}}{3}\bar{y}\displaystyle\frac{\partial}{\partial \bar{y}},$$
which implies that we have the following constrained system in the positive $y$-direction:
$$\bar{x}\dot{\bar{x}} = 1 - \displaystyle\frac{2}{3}\bar{x}^{3}, \ \bar{x}\dot{\bar{y}} = \displaystyle\frac{\bar{x}^{2}\bar{y}}{3}.$$

Observe that in this chart the impasse set intersects the exceptional divisor $\{y = 0\}$ at the origin and the point $\big{(}(\frac{3}{2})^{\frac{1}{3}},0\big{)}$ is a hyperbolic saddle of the adjoint vector field. The main segment $\gamma_{1}$ is sent to the horizontal segment $\bar{\gamma}_{1}$ contained in the line $\{s = 6\}$ of the $rs$-plane. When we multiply the auxiliary vector field by $\bar{y}^{-6}$, we translate the horizontal segment $\bar{\gamma}_{1}$ to the $r$-axis. See Figure \ref{fig-exe-newton-cusp-dobra-x}.

\begin{figure}[h]
  \center{\includegraphics[width=0.75\textwidth]{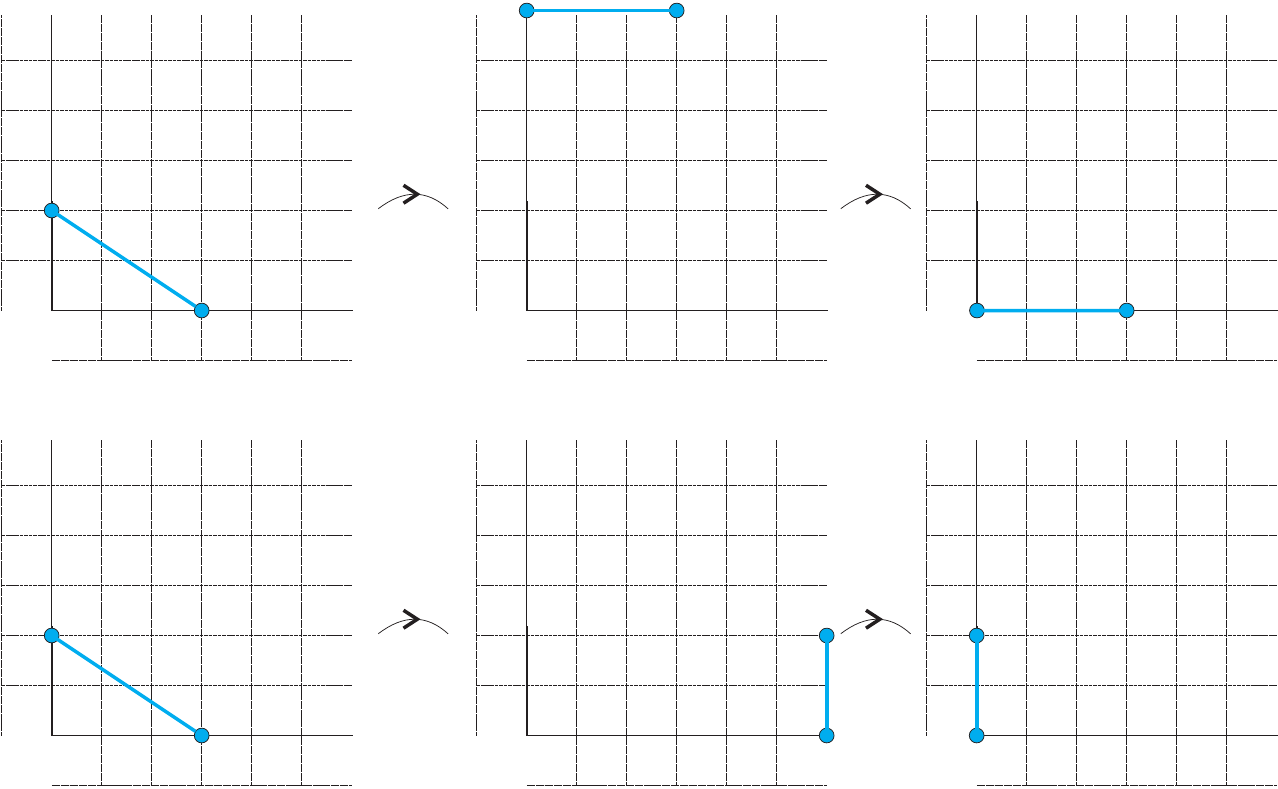}}\\
  \caption{Main segment after the blow-up in the $y$ direction (above) and after the blow-up in the $x$ direction (below).}\label{fig-exe-newton-cusp-dobra-x}
\end{figure}

Observe that all points in the exceptional divisor $\{x = 0\}$ are elementary, therefore the strict transformed of $\eqref{exe-cusp-fold}$ is elementary in $\{x = 0\}$. See Figure \ref{fig-exe-cusp-dobra}.

\begin{figure}[h]
  \center{\includegraphics[width=0.65\textwidth]{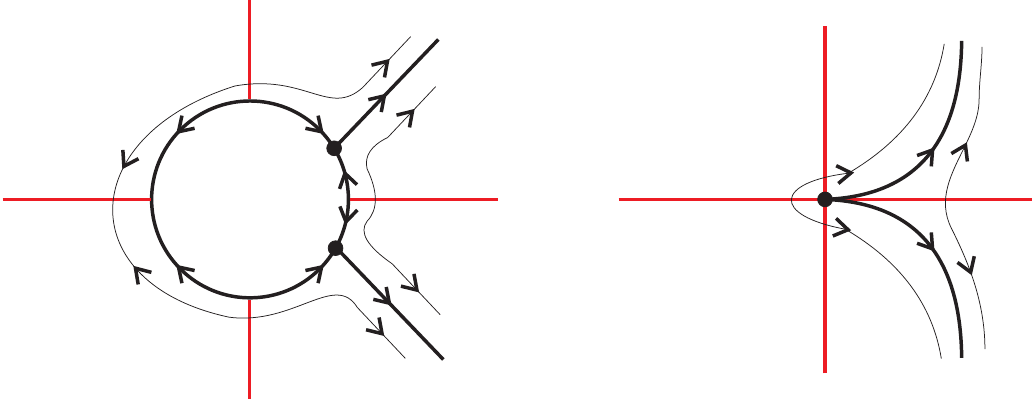}}\\
  \caption{Constrained system \eqref{exe-cusp-fold} after (left) and before (right) the weighted blow-up at the origin.}\label{fig-exe-cusp-dobra}
\end{figure}

\section{Properties of the Newton polygon: controllability and the notion of elementary point}\label{sec-properties}
\noindent

As we discussed previously, the Newton polygon strongly depends on the coordinates adopted. This means that after a blow up or a change of coordinates, we may obtain a completely different Newton polygon. The next goal is to defined tools in order to control the polygon during the resolution of singularities. More Precisely, we define tools to control the height of the Newton polygon. We also introduce a change of coordinates that will be useful during the resolution process (the so called admissible change of coordinates). In the end of this section, we show that non elementary points of a constrained system can be identified by means of the Newton polygon of its auxiliary vector field.

\subsection{Preparation of coordinates}
\noindent

We say that a coordinate system is \textbf{adapted to the divisor $\mathbb{E}$} if in such coordinate system the exceptional divisor is given by $\mathbb{E}_{x} = \{x = 0\}$, $\mathbb{E}_{y} = \{y = 0\}$ or $\mathbb{E}_{x,y} = \{xy = 0\}$ (see Definition 5.1, \cite{Dumortier}).

\begin{definition}
The Newton polygon $\mathcal{P}$ associated to a vector field is \textbf{controllable} if the main vertex $v_{0}$ is contained in $\{0\}\times\mathbb{N}$ or $\{-1\}\times\mathbb{N}$.
\end{definition}

Take the diagonalized constrained system
\begin{equation}\label{eq-impasse-2}
    \delta(x,y)\dot{x} = P(x,y), \ \delta(x,y)\dot{y} = Q(x,y),
\end{equation}
where $\delta$, $P$ and $Q$ are real analytic functions written as
\begin{center}
$\delta(x,y) = \sum c_{k,l}x^{k}y^{l}$, $P(x,y) = \sum a_{m,n}x^{m}y^{n}x$ and $Q(x,y) = \sum b_{m,n}x^{m}y^{n}y$.
\end{center}

The auxiliary vector field $X_{A}$ associated to \eqref{eq-impasse-2} is then given by
\begin{equation}\label{eq-aux-vec-field}
X_{A}(x,y) = \Bigg{(}\sum c_{k,l}x^{k}y^{l}\Bigg{)}\Bigg{(}\sum x^{m}y^{n}\big{(}a_{m,n}x\displaystyle\frac{\partial}{\partial x} + b_{m,n}y\displaystyle\frac{\partial}{\partial y}\big{)}   \Bigg{)}
\end{equation}

In \cite{Pelletier} was considered vector fields with isolated equilibrium points. This condition implies that $\partial\mathcal{P}$ intersects the $rs$-axes of the $rs$-plane. However, this is not always true when we consider vector fields with curves of singularities, which is the case of the auxiliary vector field. For example, consider the constrained system
$$
xy\dot{x} = x^{k}; \ xy\dot{y} = 2y^{l},
$$
whose auxiliary vector field is given by
$$
X_{A} = xyx^{k}\displaystyle\frac{\partial}{\partial x} +  xyy^{l}\displaystyle\frac{\partial}{\partial y}.
$$

The support of $X_{A}$ is then given by $\mathcal{Q} = \{(k,1), (1,l)\}$, and therefore $\partial\mathcal{P}$ does not intersect the $rs$-axes of the $rs$-plane.

In the resolution of singularities, we will assume that $\partial\mathcal{P}$ intersects at least the $s$-axis. The next lemma shows that this is not a restrictive condition.

\begin{lemma}\label{lemma-change-1}
Consider the constrained system \eqref{eq-impasse-2} near a point $p\not\in\partial\mathcal{M}$ and its auxiliary vector field \eqref{eq-aux-vec-field}. Define the change of coordinates
\begin{equation}\label{eq-lemma-change-1}
\Phi(\bar{x},\bar{y}) = (\bar{x} + \lambda\bar{y},\bar{y});
\end{equation}
with $\lambda\in\mathbb{R}$. Then the following statements are true:
\begin{enumerate}
  \item The diffeomorphism $\Phi$ gives us an orbital equivalence between \eqref{eq-impasse-2} and
\begin{equation}\label{lemma-change-1-eq-constrained}
\left\{
\begin{array}{rcl}
    \delta(\bar{x} + \lambda\bar{y},\bar{y})\dot{\bar{x}} & = & P(\bar{x} + \lambda\bar{y},\bar{y}) - \lambda Q(\bar{x} + \lambda\bar{y},\bar{y}), \\
    \delta(\bar{x} + \lambda\bar{y},\bar{y})\dot{\bar{y}} & = & Q(\bar{x} + \lambda\bar{y},\bar{y}).
  \end{array}
\right.
\end{equation}
  \item The diffeomorphism $\Phi$ gives us a topological equivalence between \eqref{eq-aux-vec-field} and the auxiliary vector field of \eqref{lemma-change-1-eq-constrained}.
  \item The Newton polygon $\mathcal{P}$ associated to the auxiliary vector field of \eqref{lemma-change-1-eq-constrained} has $(0,r)$ or $(-1,r)$ as main vertex, where $r$ is the lowest degree of its expansion.
\end{enumerate}
\end{lemma}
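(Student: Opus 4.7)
The first two items are direct computations, and the substance lies in item~(3). For (1), substitute $x=\bar x + \lambda\bar y$, $y=\bar y$ into \eqref{eq-impasse-2}. Since $\dot x = \dot{\bar x} + \lambda\dot{\bar y}$ and $\dot y=\dot{\bar y}$, the system becomes $\delta(\dot{\bar x} + \lambda\dot{\bar y}) = P$ and $\delta\dot{\bar y} = Q$, all evaluated at $(\bar x+\lambda\bar y,\bar y)$; eliminating $\dot{\bar y}$ from the first yields precisely \eqref{lemma-change-1-eq-constrained}. Because $\Phi$ is an analytic diffeomorphism sending the impasse set $\{\delta=0\}$ bijectively to $\{\delta(\bar x+\lambda\bar y,\bar y)=0\}$ and phase curves to phase curves outside it, both conditions in the definition of orbital equivalence hold.

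For (2), I would compute the pullback $\Phi^{*}X_{A}$ directly. Using $(D\Phi)^{-1}=\bigl(\begin{smallmatrix}1 & -\lambda\\ 0 & 1\end{smallmatrix}\bigr)$ and $X_{A}=\delta(P\partial_{x}+Q\partial_{y})$, one finds
\[
\Phi^{*}X_{A}(\bar x,\bar y) \;=\; \delta(\bar x + \lambda\bar y,\bar y)\bigl[(P-\lambda Q)(\bar x + \lambda\bar y,\bar y)\,\partial_{\bar x} \,+\, Q(\bar x+\lambda\bar y,\bar y)\,\partial_{\bar y}\bigr],
\]
which coincides with the auxiliary vector field of \eqref{lemma-change-1-eq-constrained}; since $\Phi$ is a real analytic diffeomorphism, this provides the required (in fact analytic, hence topological) equivalence.

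For (3), the key step, I would decompose the auxiliary vector field of \eqref{lemma-change-1-eq-constrained} by total polynomial degree and look at its minimal nonzero homogeneous part. The central observation is that under $\Phi$ every monomial $x^{a}y^{b}$ becomes $(\bar x+\lambda\bar y)^{a}\bar y^{b}$, whose expansion contains the pure $\bar y$--contribution $\lambda^{a}\bar y^{a+b}$. Collecting these contributions at the minimal level in each of the two components of $\Phi^{*}X_{A}$ produces coefficients that are polynomials in $\lambda$. The coefficient in the $\partial_{\bar y}$--component equals $Q_{\min}(\lambda,1)$, where $Q_{\min}$ is the lowest homogeneous part of $\delta Q$; since a nonzero homogeneous polynomial in two variables cannot vanish identically under an invertible linear substitution, this polynomial is not identically zero in $\lambda$. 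Choosing $\lambda$ outside its finite zero set, the new auxiliary vector field carries a nonzero monomial $c_{1}\bar y^{r+1}\partial_{\bar y}$, and possibly also $c_{2}\bar y^{r}\partial_{\bar x}$, at the minimal level. In the logarithmic basis these correspond to support points $(0,r)$ and $(-1,r)$ respectively, and the constraints $a_{m,n}=0$ for $m<-1$, $b_{m,n}=0$ for $m\le -1$ preclude any support point with first coordinate below $-1$. Hence the leftmost vertex of the Newton polygon lies on $\{-1\}\times\mathbb{N}$ or $\{0\}\times\mathbb{N}$, as required. The main technical obstacle is exactly this genericity assertion on $\lambda$: the potential cancellation in the combination $P-\lambda Q$ appearing in the $\partial_{\bar x}$--component is bypassed by working instead with the untouched $Q$ in the $\partial_{\bar y}$--component, which guarantees that the controlling polynomial does not collapse.
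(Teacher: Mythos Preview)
Your treatment of (1) and (2) is correct and essentially identical to the paper's.

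For (3), the paper argues a little differently: it expands both components of the transformed auxiliary field in the logarithmic basis and splits into two cases according to whether $a_{m,n}=b_{m,n}$ for \emph{all} $(m,n)$. When some $a_{m,n}\neq b_{m,n}$, the $\partial_{\bar x}$ component carries a pure-$\bar y$ term with coefficient proportional to $\lambda(a_{m,n}-b_{m,n})$, producing the support point $(-1,r)$; in the remaining case (the adjoint field a multiple of the Euler field $x\partial_x+y\partial_y$), some $b_{m,n}\neq 0$ and the $\partial_{\bar y}$ component supplies $(0,r)$. Both arguments tacitly require $\lambda$ generic, a point you make explicit and the paper leaves implicit.

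Your route is cleaner in the typical case but has a small gap. You rely on the $\partial_{\bar y}$ component alone, identifying its pure-$\bar y$ coefficient at the minimal level of $\Phi^{*}X_{A}$ with $(\delta Q)_{\min}(\lambda,1)$. That identification fails when $\delta Q$ vanishes to \emph{strictly higher} order than $\delta P$ (in particular when $Q\equiv 0$): at the minimal level of $\Phi^{*}X_{A}$ the $\partial_{\bar y}$ component is then identically zero, and no choice of $\lambda$ rescues it. Precisely in this situation, however, the cancellation you feared in $P-\lambda Q$ cannot occur: at that minimal level the $\partial_{\bar x}$ component reduces to $(\delta P)_{\min}(\bar x+\lambda\bar y,\bar y)$, whose pure-$\bar y$ coefficient $(\delta P)_{\min}(\lambda,1)$ is a nonzero polynomial in $\lambda$ by the same homogeneity argument you already gave, yielding the support point $(-1,r)$. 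Adding this one-line fallback closes the gap and makes your proof complete; it is, in effect, the paper's first case handled from your point of view.
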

\begin{proof}
Observe that
\begin{center}
$\left\{
    \begin{array}{rcl}
      \bar{x} & = & x - \lambda y, \\
      \bar{y} & = & y,
    \end{array}
  \right.
$ $\Rightarrow$ $\left\{
    \begin{array}{rcl}
      \dot{\bar{x}} & = & \dot{x} - \lambda\dot{y}, \\
      \dot{\bar{y}} & = & \dot{y},
    \end{array}
  \right.$
\end{center}
and then we obtain the system \eqref{lemma-change-1-eq-constrained}. Since $\Phi$ is a diffeomorphism, it maps the impasse set of \eqref{lemma-change-1-eq-constrained} onto the impasse set of \eqref{eq-impasse-2}. Moreover, $\Phi$ maps phase curves of \eqref{lemma-change-1-eq-constrained} in phase curves of \eqref{eq-impasse-2}. This implies that \eqref{lemma-change-1-eq-constrained} and \eqref{eq-impasse-2} are orbitally equivalent. This also implies that \eqref{eq-aux-vec-field} and the auxiliary vector field of \eqref{lemma-change-1-eq-constrained} are topologically equivalent, provided outside the impasse set the phase curves of a constrained system and its auxiliary vector field are the same. Then items 1 and 2 are true.

After this coordinate change, the auxiliary vector field takes the form
$$
\scriptstyle
X_{A} = \Bigg{(}\sum c_{k,l}(\bar{x} + \lambda\bar{y})^{k}\bar{y}^{l}\Bigg{)}\Bigg{(}\sum (\bar{x} + \lambda\bar{y})^{m}\bar{y}^{n}\big{(}(a_{m,n}(\bar{x} + \lambda\bar{y}) - \lambda b_{m,n}\bar{y})\frac{\partial}{\partial \bar{x}} + b_{m,n}\bar{y}\frac{\partial}{\partial \bar{y}}\big{)}\Bigg{)}.
$$

Without loss of generality (and in order to simplify the notation), we will consider the first jet from now. Then we obtain
$$
\scriptstyle
\sum_{i = 0}^{k}c_{k,l}\binom{k}{i}\lambda^{i}\bar{x}^{k-i}\bar{y}^{l+ i}\Bigg{(}\sum_{j = 0}^{m}\binom{m}{j}\lambda^{j}\bar{x}^{m-j}\bar{y}^{n+j}\big{(}a_{m,n}\bar{x} + \lambda\bar{y}(a_{m,n} - b_{m,n})\big{)}\Bigg{)}\frac{\partial}{\partial \bar{x}}
$$
in the $\bar{x}$ component and
$$
\scriptstyle
\sum_{i = 0}^{k}c_{k,l}\binom{k}{i}\lambda^{i}\bar{x}^{k-i}\bar{y}^{l+ i}\Bigg{(}\sum_{j = 0}^{m}\binom{m}{j}\lambda^{j}\bar{x}^{m-j}\bar{y}^{n+j}b_{m,n}\Bigg{)}\bar{y}\frac{\partial}{\partial \bar{y}}
$$
in the $\bar{y}$ component.

If there is a pair $(m,n)$ such that $a_{m,n} - b_{m,n} \neq 0$, then the $\bar{x}$ component gives a contribution of the form $(-1,r)$ for the new support $\overline{\mathcal{Q}}$, where $r = (k+m) + (l+n)$. If $a_{m,n} - b_{m,n} = 0$ for all $(m,n)$, observe that at least one $b_{m,n}$ is nonzero and then the $\bar{y}$ component gives a contribution of the form $(0,r)$.
\end{proof}

By Lemma \ref{lemma-change-1} we can always suppose that the Newton polygon of the auxiliary vector field is controllable. Furthermore, the case where $p\in\partial \mathcal{M}$ gives us a controllable polygon in adapted coordinates since $\partial \mathcal{M}$ is invariant by the adjoint vector field.

\subsection{Admissible change of coordinates}
\noindent

In this section we consider another change of coordinates that will be useful during the resolution process. Such change of coordinates was called admissible in \cite{Pelletier}.

\begin{lemma}\label{lemma-change-2}
Consider a 2-dimensional constrained system near a point $p\in\mathcal{M}$, written in local coordinates as \eqref{eq-impasse-2}. Consider also its auxiliary vector field \eqref{eq-aux-vec-field}. Suppose also that its Newton polygon $\mathcal{P}_{1}$ is controllable. Define the change of coordinates
\begin{equation}\label{eq-lemma-change-2}
\Psi(\bar{x},\bar{y}) = (\bar{x}, - \alpha\bar{x}^{\beta} + \bar{y}),
\end{equation}
with $\alpha\in\mathbb{R}$ and $\beta\in\mathbb{N}$. Then the following statements are true:
\begin{enumerate}
  \item The diffeomorphism $\Psi$ is an orbital equivalence between \eqref{eq-impasse-2} and
\begin{equation}\label{lemma-change-2-eq-constrained}
\left\{
  \begin{array}{rcl}
    \delta(\bar{x},\bar{y} - \alpha\bar{x}^{\beta})\dot{\bar{x}} & = & P(\bar{x},\bar{y} - \alpha\bar{x}^{\beta}), \\
    \delta(\bar{x},\bar{y} - \alpha\bar{x}^{\beta})\dot{\bar{y}} & = & \alpha\beta P(\bar{x},\bar{y} - \alpha\bar{x}^{\beta})\bar{x}^{\beta-1} + Q(\bar{x},\bar{y} - \alpha\bar{x}^{\beta}).
  \end{array}
\right.
\end{equation}
  \item The diffeomorphism $\Psi$ is a topological equivalence between \eqref{eq-aux-vec-field} and the auxiliary vector field of \eqref{lemma-change-2-eq-constrained}.
  \item The Newton polygons $\mathcal{P}_{1}$ and $\mathcal{P}_{2}$ associated to the auxiliary vector field of \eqref{lemma-change-2-eq-constrained} and \eqref{eq-aux-vec-field}, respectively, have the same height.
  \item If the Newton polygon of \eqref{eq-aux-vec-field} has a non horizontal main segment, then the slope of the main segment increases after this change of coordinates.
  \item If before this change of coordinates the divisor is locally given by $\mathbb{E}_{x}$, then in this new coordinate system the divisor is still invariant by the adjoint vector field.
\end{enumerate}
\end{lemma}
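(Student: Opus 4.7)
The plan is to dispose of items (1), (2), and (5) by direct computation and to concentrate the work on the Newton polygon bookkeeping for (3) and (4).

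For (1), I would observe that $\Psi$ is a polynomial diffeomorphism whose inverse is $(x,y)\mapsto(x,\,y+\alpha x^{\beta})$; differentiating gives $\dot{\bar{x}}=\dot{x}$ and $\dot{\bar{y}}=\dot{y}+\alpha\beta\bar{x}^{\beta-1}\dot{x}$, and substituting into \eqref{eq-impasse-2} and solving for $\dot{\bar{x}},\dot{\bar{y}}$ yields exactly \eqref{lemma-change-2-eq-constrained}. Since $\Psi$ is an analytic diffeomorphism, it carries the impasse set of one system onto that of the other and sends phase curves to phase curves, giving the orbital equivalence. Item (2) follows because outside $\Delta$ the phase curves of a diagonalized constrained system coincide with those of its auxiliary vector field, so $\Psi$ also implements a topological equivalence of the two auxiliary vector fields. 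For (5), invariance of $\{x=0\}$ for the original adjoint vector field means $P(0,y)\equiv 0$; hence $P(\bar{x},\bar{y}-\alpha\bar{x}^{\beta})|_{\bar{x}=0}=P(0,\bar{y})=0$, and the supplementary summand $\alpha\beta\bar{x}^{\beta-1}P$ also vanishes on $\{\bar{x}=0\}$ when $\beta\geq 1$, so $\{\bar{x}=0\}$ remains invariant for the new adjoint vector field.

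The substance of the proof lies in (3) and (4), for which I would analyze the binomial expansion
\[
\bar{x}^{r}(\bar{y}-\alpha\bar{x}^{\beta})^{s}=\sum_{j=0}^{s}\binom{s}{j}(-\alpha)^{j}\bar{x}^{r+j\beta}\bar{y}^{s-j}.
\]
Thus each support point $(r,s)$ of $X_{A}$ disperses along the ray of slope $-1/\beta$ through $(r,s)$, reaching as far as $(r+s\beta,0)$. The extra $\alpha\beta\bar{x}^{\beta-1}P$ term in the new $\partial_{\bar{y}}$-component contributes additional points, but in the logarithmic basis these are obtained from the $P$-support by the shift $(\beta-1,-1)$ and still satisfy $r\geq -1$. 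For (3), controllability of $\mathcal{P}_{1}$ gives $r_{0}\in\{-1,0\}$; a new support point with $r$-coordinate equal to $r_{0}$ can only come from an original point in the same column taken with $j=0$, since any $j\geq 1$ would force the preimage to satisfy $r=r_{0}-j\beta<-1$, forbidden by the logarithmic normal form. Consequently the column $r=r_{0}$ of the new support equals the old one coefficient by coefficient, the main vertex $(r_{0},s_{0})$ persists, and the height is still $s_{0}$.

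For (4) the same bookkeeping shows that to the right of $v_{0}$ the new support is the old one dispersed along rays of slope $-1/\beta$. Since $v_{0}$ is preserved, the new main segment still emanates from $(r_{0},s_{0})$; convexity of the dispersed configuration together with the non-horizontal hypothesis on the original main segment forces the first edge of the new lower-left boundary to have slope at least that of the old main segment, with strict inequality precisely when the choice of $\alpha$ triggers cancellation of the $\gamma_{1}$-quasi-homogeneous part. The point I expect to be the main obstacle is making this last bookkeeping clean: ruling out new vertices appearing below $(r_{0},s_{0})$ through cancellations involving the supplementary $\alpha\beta\bar{x}^{\beta-1}P$ term, and checking that the dispersed support actually clips the old main segment rather than sliding clear of it.
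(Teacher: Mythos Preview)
Your approach is essentially the same as the paper's: items (1), (2), (5) by direct computation, and items (3), (4) by expanding $(\bar y-\alpha\bar x^{\beta})^{s}$ binomially and tracking how each support point disperses along the direction $(\beta,-1)$, with the controllability constraint $r_{0}\in\{-1,0\}$ forcing the leftmost column (hence the main vertex and the height) to be preserved. Your caution about item~(4) is well placed---the paper's own argument there (``the points of the support, except the main vertex, are translated to the right'') is at the same informal level, and in the actual application the change is always taken with $\beta=\omega_{2}/\omega_{1}$ and $\alpha$ chosen to cancel the lowest quasi-homogeneous level, which is exactly the cancellation you identify as the source of the strict increase.
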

\begin{proof}
Items 1 and 2 are proved applying the same ideas of the proof of the Lemma \ref{lemma-change-1}, so we will prove items 3, 4 and 5.

After a change of coordinates of the form \eqref{eq-lemma-change-2}, the auxiliary vector field associated to \eqref{lemma-change-2-eq-constrained} takes the form
$$
\scriptstyle
\Bigg{(}\sum c_{k,l}\bar{x}^{k}(\bar{y} - \alpha\bar{x}^{\beta})^{l}\Bigg{)}\Bigg{(}\sum\bar{x}^{m}(\bar{y} - \alpha\bar{x}^{\beta})^{n}a_{m,n}\Bigg{)}\bar{x}\frac{\partial}{\partial \bar{x}}
$$
in the $\bar{x}$ component and
$$
\scriptstyle
\Bigg{(}\sum c_{k,l}\bar{x}^{k}(\bar{y} - \alpha\bar{x}^{\beta})^{l}\Bigg{)}\Bigg{(}\sum\bar{x}^{m}(\bar{y} - \alpha\bar{x}^{\beta})^{n}\big{(}(\alpha\beta a_{m,n} -\alpha b_{m,n})\bar{x}^{\beta} + b_{m,n}\bar{y}\big{)}\Bigg{)}\frac{\partial}{\partial \bar{y}}
$$
in the $\bar{y}$ component. Once again, without loss of generality we will consider the first jet (in order to simplify the notation). Then we obtain
$$
\scriptstyle
\Bigg{(}\sum_{i = 0}^{l} c_{k,l}\binom{l}{i}(-\alpha)^{l-i}\bar{x}^{k+\beta(l-i)}\bar{y}^{i}\Bigg{)}\Bigg{(}\sum_{j=0}^{n}\binom{n}{j}(-\alpha)^{n-j}a_{m,n} \bar{x}^{m + \beta(n-j)}\bar{y}^{j}\Bigg{)}\bar{x}\frac{\partial}{\partial \bar{x}}
$$
in the $\bar{x}$ component and
$$
\scriptstyle
\Bigg{(}\sum_{i = 0}^{l} c_{k,l}\binom{l}{i}(-\alpha)^{l-i}\bar{x}^{k+\beta(l-i)}\bar{y}^{i}\Bigg{)}\Bigg{(}\sum_{j=0}^{n}\binom{n}{j}(-\alpha)^{n-j}\bar{x}^{m + \beta(n-j)}\bar{y}^{j}\big{(}(\alpha\beta a_{m,n} - \alpha b_{m,n})\bar{x}^{\beta} + b_{m,n}\bar{y}\big{)}\Bigg{)}\frac{\partial}{\partial \bar{y}}
$$
in the $\bar{y}$ component.

By assumption, the auxiliary vector field \eqref{eq-aux-vec-field} has $(-1,r)$ or $(0,r)$ as main vertex, where $r$ is the degree of the lowest graduation. This implies that there are $(k,l)$ and $(m,n)$ such that $(k+m) = -1$ (respectively $0$) and $(l+n) = r$. By these two expressions obtained previously, this property is still true for $\beta\in\mathbb{N}$, then we still have a point of the form $(-1,r)$ or $(0,r)$ as main vertex.

Since the main vertex was preserved, observe now that this operation increases the powers of the variable $x$ (because $\beta > 0$), then the $r$ entries of the points of the new support $\overline{\mathcal{Q}}$ are increased. Therefore, the points of the support (except the main vertex) are translated to the right and thus the slope of the main segment increases. See Figure \ref{fig-lemma-change-coord}.

\begin{figure}[h]
  \center{\includegraphics[width=0.50\textwidth]{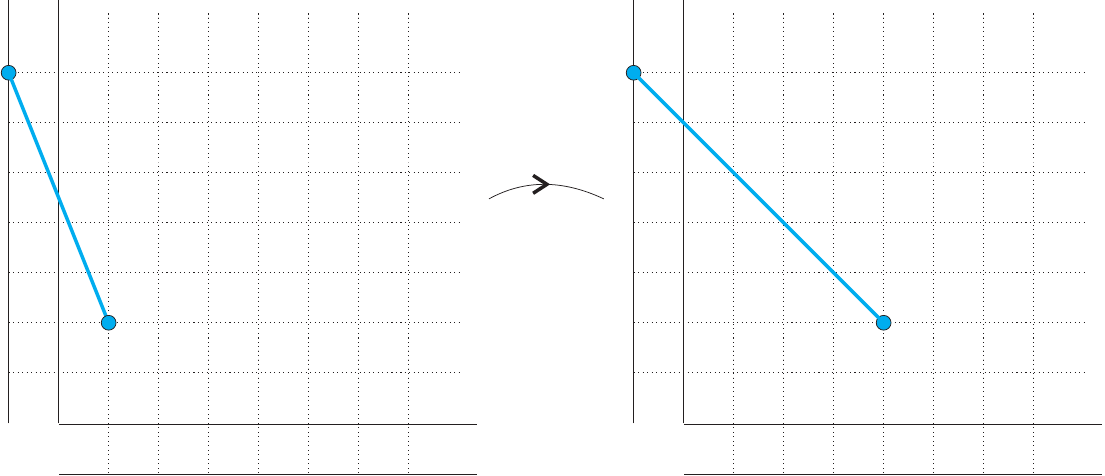}}\\
  \caption{Newton polygon of the auxiliary vector field after the change of coordinates \eqref{eq-lemma-change-2}.}\label{fig-lemma-change-coord}
\end{figure}

In the case where the divisor is locally given by $\mathbb{E}_{x}$, we have that the main vertex is of the form $(0,r)$ provided that the divisor is invariant by the adjoint vector field. After this coordinate change, the divisor is still locally given by $\mathbb{E}_{x}$. Moreover, by these two expressions of the auxiliar vector field obtained previously it follows that the divisor will be once again invariant by the adjoint vector field.
\end{proof}

\begin{definition}
The finite composition of change of coordinates of the form \eqref{eq-lemma-change-2} is called \textbf{admissible change of coordinates}.
\end{definition}

After an admissible change of coordinates, we obtain a coordinate system adapted to the divisor, the exceptional divisor is still invariant by the adjoint vector and the Newton polygon of the auxiliary vector field is still controllable.

\subsection{The relation between an elementary 2-dimensional constrained system and its Newton polygon}
\noindent

During the resolution of singularities, we must apply a finite number of operations (blow-ups) in order to obtain a ``simpler'' constrained system. This notion of ``simple'' comes from the notion of elementary constrained system (Definition \ref{def-desing2}). In what follows we show how to identify elementary points of the constrained system by means of the Newton polygon of the auxiliary vector field.

\begin{definition}\label{def-desing1}
A planar constrained differential system is \textbf{Newton elementary at $p$} if the Newton polygon $\mathcal{P}$ associated to the auxiliary vector field $X_{A}$ satisfies one of the following:
\begin{itemize}
  \item The main vertex of $\mathcal{P}$ is $(0,0)$, $(0,-1)$ or $(-1,0)$ (that is, if the height is less or equal to zero);
  \item The main segment $\gamma_{1}$ is horizontal.
\end{itemize}
\end{definition}

\textbf{Example:} We present an example to justify why we consider the case where $\gamma_{1}$ is horizontal as a final situation. Consider the constrained system
$$
y\dot{x} = 2x, \ y\dot{y} = -y.
$$

The support of the auxiliary vector field only contains the point $(0,1)$, and it is straightforward to see that the origin is an hyperbolic saddle of the adjoint vector field, where $\{x = 0\}$ and $\{y = 0\}$ are the separatrices. This means that $\{y = 0\}$ is at the same time a separatrix and the impasse set. It is not possible to change this configuration with a finite sequence of blowing-ups at the origin.

\begin{theorem}\label{teo-eq-definitions}
A planar constrained system is elementary at $p$ if, and only if, it is Newton elementary at $p$.
\end{theorem}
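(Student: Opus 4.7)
I would prove the theorem by a case analysis on the shape of the Newton polygon of the auxiliary vector field $X_A=\delta X$, correlating each case with the local structure of the constrained system $(\delta,X)$ at $p$. The main computational tool is the explicit expansion of the logarithmic coefficients of $X_A$:
$$\alpha_{r,s}=\sum_{k+m=r,\ l+n=s}c_{k,l}\,a_{m,n},\qquad \beta_{r,s}=\sum_{k+m=r,\ l+n=s}c_{k,l}\,b_{m,n},$$
from which one extracts a dictionary: $\alpha_{-1,s}$ is the coefficient of $y^s$ in $\delta(0,y)\,P(0,y)$, and $\beta_{r,-1}$ is the coefficient of $x^r$ in $\delta(x,0)\,Q(x,0)$. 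Consequently, the absence of the column $\{r=-1\}$ (resp.\ $\{s=-1\}$) from the support of $X_A$ means precisely that either $\Delta\supseteq\{x=0\}$ or $\{x=0\}$ is $X$-invariant (resp.\ the analogue for $y$). By Lemmas \ref{lemma-change-1} and \ref{lemma-change-2} I may assume throughout that the coordinates are adapted to the divisor and that the Newton polygon is controllable.

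\textbf{Newton elementary $\Rightarrow$ elementary.} Split on the main vertex $v_0$ and on the slope of $\gamma_1$. If $v_0=(-1,0)$, then the unique surviving term in $\alpha_{-1,0}=c_{0,0}\,a_{-1,0}$ forces $\delta(p)\neq 0$ and $P(p)\neq 0$, hence $p$ is non-singular; the case $v_0=(0,-1)$ is symmetric. For $v_0=(0,0)$, applying the dictionary to the vanishing of every point of the support with $r=-1$ or $s=-1$ leaves exactly two possibilities: either $\delta(p)\neq 0$ with both axes $X$-invariant, so that the diagonal Jacobian of $X$ has at least one nonzero eigenvalue recorded by $\alpha_{0,0}$ or $\beta_{0,0}$ (a semi-hyperbolic equilibrium off $\Delta$); or $\delta(p)=0$ with $\Delta$ coinciding with a coordinate axis and $X$ transverse to it, the irreducibility of $\delta$ excluding every mixed configuration. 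When $\gamma_1$ is horizontal at height $s_0\geq 1$, the absence of support below $s_0$ together with the no-common-factor hypothesis on $(P,Q)$ forces $s_0=1$, $\Delta=\{y=0\}$ (up to relabeling), $Q$ divisible by $y$, and $v_0=(0,1)$; the coefficients $\alpha_{0,1}$ and $\beta_{0,1}$ then record the eigenvalues of the upper-triangular linear part of $X$ at $p$, producing a semi-hyperbolic equilibrium whose separatrix is $\Delta$—elementary case (3) of Definition \ref{def-desing2}.

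\textbf{Elementary $\Rightarrow$ Newton elementary, and main obstacle.} The three clauses of Definition \ref{def-desing2} are handled by reversing the same computations. Non-singularity off $\Delta$ places $v_0$ at $(-1,0)$ or $(0,-1)$; non-singularity on $\Delta$, after straightening $\Delta$ to a coordinate axis by the implicit function theorem together with Lemma \ref{lemma-change-2}, yields $v_0=(0,0)$; a semi-hyperbolic equilibrium off $\Delta$, after the linear change of Lemma \ref{lemma-change-1}, yields $v_0=(0,0)$ with height $0$; a semi-hyperbolic equilibrium on $\Delta$ whose separatrix is $\Delta$, after straightening $\Delta=\{y=0\}$ and using $Q=y\hat{Q}$, gives $v_0=(0,1)$ with all remaining support at height $\geq 1$, so $\gamma_1$ is horizontal. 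The delicate step is the case $v_0=(0,0)$ of the converse direction: the single piece of information ``$(0,0)$ lies in the support'' records only the composite sum $c_{0,0}a_{0,0}+c_{1,0}a_{-1,0}$ (and its analogue for $\beta$), and one must combine it with the simultaneous vanishing of every $\alpha_{-1,s}$ and $\beta_{r,-1}$, together with the irreducibility of $\delta$, to decide whether $p\in\Delta$ and, if so, to identify $\Delta$ with a coordinate axis without entering any of the degenerate mixed configurations.
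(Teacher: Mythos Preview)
Your overall architecture---a case split on the position of $v_0$ and the slope of $\gamma_1$, driven by the convolution formula $\alpha_{r,s}=\sum c_{k,l}a_{m,n}$---is exactly the route the paper takes, and the ``main obstacle'' you flag in the $v_0=(0,0)$ case is indeed the delicate point. However, two of your intermediate claims are not quite right and one of them leaves a genuine gap.

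\textbf{The horizontal case misses $v_0=(-1,1)$.} From $s_0=1$ and $\delta\sim y$ you correctly obtain $y\mid Q$, but your assertion ``and $v_0=(0,1)$'' does not follow: controllability only pins $v_0$ to $\{-1,0\}\times\{1\}$. If $P(0,0)\neq 0$ then $\alpha_{-1,1}=c_{0,1}a_{-1,0}\neq 0$, so $v_0=(-1,1)$; here $X(p)\neq 0$ and $p$ is \emph{not} an equilibrium at all---the phase curve of $X$ through $p$ coincides with $\Delta$. Your conclusion ``semi-hyperbolic equilibrium whose separatrix is $\Delta$'' applies only to the $v_0=(0,1)$ branch. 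The paper treats these as separate subcases ($a_{-1,0}\neq 0$ versus $a_{0,0}^2+b_{0,0}^2\neq 0$), and you need to do the same. The mirror of this omission appears in your converse: for ``non-singular on $\Delta$'' you claim that straightening $\Delta=\{y=0\}$ gives $v_0=(0,0)$, but take $\delta=y$, $X=(1,1)$: the support of $X_A$ is $\{(-1,1),(0,0)\}$, so $v_0=(-1,1)$ and $\gamma_1$ is not horizontal. A coordinate swap (or a further use of Lemma~\ref{lemma-change-1}) is needed, and you should say so.

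\textbf{The $v_0=(0,0)$ dichotomy is overstated.} The main vertex being $(0,0)$ kills the column $r=-1$ but not the row $s=-1$, so you only get that \emph{one} axis (namely $\{x=0\}$) is $X$-invariant when $\delta(p)\neq 0$; the Jacobian of $X$ is then lower-triangular, not diagonal. Combined with $\beta_{0,-1}=c_{0,0}b_{0,-1}=0$ this still forces $Q(0,0)=0$ and hence $\alpha_{0,0}=c_{0,0}a_{0,0}$, $\beta_{0,0}=c_{0,0}b_{0,0}$, so semi-hyperbolicity follows---your conclusion survives, but the reasoning needs repair. Likewise, when $\delta(p)=0$ you cannot conclude that $\Delta$ is a coordinate axis: if $c_{0,0}=0$, $c_{0,1}\neq 0$ and $P(0,y)\equiv 0$, then $\Delta$ is merely smooth with $X$ transverse (from $\beta_{0,0}=c_{0,1}b_{0,-1}\neq 0$), which is already enough for non-singularity.
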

\begin{proof}
Fix a controllable coordinate system near $p$ and consider the auxiliary vector field
\begin{equation}\label{proof-equivalence}
\scriptstyle X_{A}(x,y) = \Bigg{(}\sum c_{k,l}x^{k}y^{l}\Bigg{)}\Bigg{(}\sum x^{m}y^{n}\big{(}a_{m,n}x\frac{\partial}{\partial x} + b_{m,n}y\frac{\partial}{\partial y}\big{)}\Bigg{)}.
\end{equation}

Firstly, suppose that the constrained system is Newton elementary at $p$. Observe that we do not have $m = -1$ and $n=-1$ at the same time in \eqref{proof-equivalence}. Take without loss of generality $m\geq 0$ and $n\geq -1$.

If $(0,0)\in\mathcal{P}$, by the expression \eqref{proof-equivalence} we have $(k + m) = 0$ and $(l+n) = 0$. It follows that $k = m = 0$ because $m \geq 0$. Since $(l+n) = 0$, $n\geq -1$ and $l\geq 0$, we have two cases to consider:
\begin{itemize}
  \item If $l = 1$ and $n = -1$, then $\delta$ is regular at $p$ and $p$ is not an equilibrium point of $X$. If $p\in\mathbb{E}_{x}$, then the impasse set $\Delta$ intersects the divisor transversally at $p$.
  \item If $l = n = 0$, then $p\not\in\Delta$. Moreover, since $a_{-1,1} = 0$ and $a_{00}^{2} + b^{2}_{00} \neq 0$, $p$ is an elementary equilibrium point of $X$.
\end{itemize}

It follows that if $(0,0)\in\mathcal{P}$, then $(\mathcal{M},\mathcal{X},\mathcal{I})$ is elementary at $p$.

Suppose now that $(0,-1)\in\mathcal{P}$. We have that $(k+m) = 0$ and $(l+n) = -1$, and this implies that $k = m = l = 0$ and $n = -1$. Therefore, $p\not\in\Delta$ and $p$ is not an equilibrium point of $X$. The proof for the case where $(-1,0)$ is the main vertex is completely analogous.

Finally, suppose that the main segment $\gamma_{1}$ is horizontal. Since the Newton polygon of the auxiliary vector field is obtained by means of the Minkowski sum between the support of the adjoint vector field and the impasse curve, then the Newton polygon of the impasse curve and the Newton polygon of the adjoint vector field have horizontal main segment. This implies that in this system of coordinates the impasse curve is given by
$$\delta = y\Bigg{(}c_{0,1}+\sum c_{k,l}x^{k}y^{l}\Bigg{)},$$
and we have the following cases, since $p$ is either an isolated equilibrium point or a regular point of the adjoint vector field $X$:
\begin{itemize}
  \item $a_{-1,0}\neq0$, which implies that $b_{n,-1} = 0$ for all $n$ and then the impasse curve coincides with a phase curve of $X$.
  \item $b_{0,-1}\neq0$, which implies that $X$ is transversal to $\Delta$ near the origin.
  \item $a_{00}^{2} + b_{00}^{2} \neq 0$, and then a separatrix of the equilibrium point (eventually the central manifold) coincides with the impasse curve.
\end{itemize}

In these three cases, the constrained system is elementary at $p$.

Now suppose that $(\mathcal{M},\mathcal{X},\mathcal{I})$ is elementary at $p$. Without loss of generality we can choose coordinates such that $\delta(x,y) = y$.

When $p$ is not an equilibrium point of $X$ and $p\not\in\Delta$, then $a^{2}_{-1,0} + b^{2}_{0,-1} \neq 0 \neq c_{00}$ and we have $(0,-1)\in\mathcal{P}$ or $(-1,0)\in\mathcal{P}$. If $p\in\Delta$ but $p$ is not an equilibrium point of $X$, then we have two possibilities:
\begin{itemize}
  \item The adjoint vector field is transversal to the impasse set $\Delta$ and therefore $c_{0,1}\neq 0$ and $b_{0,-1} \neq 0$, which implies that $(0,0)\in\partial\mathcal{P}$.
  \item The phase curve passing through $p$ coincides with the impasse set $\Delta$, and therefore $a_{-1,0}\neq0$ and $b_{n,-1}=0$ for all $n$. Then $a_{-1,0}\neq0\neq c_{0,1}$ gives the contribution $(-1,1)\in\mathcal{P}$ and the main segment of the polygon is horizontal.
\end{itemize}

If $p$ is a semi-hyperbolic equilibrium point of $X$ and $p\not\in\Delta$, we are in the classical case studied in \cite{Pelletier} and therefore $(0,0)\in\mathcal{P}$.

Finally, suppose that $p$ is a semi hyperbolic equilibrium point of $X$ and $p\in\Delta$. Remember that we are considering adapted coordinates and $\delta(x,y) = y$. By hypothesis, $\Delta$ must be contained in a separatrix (or the central manifold) of $p$. This means that such phase curve is not transversal to $\Delta$ and all coefficients of the form $b_{n,-1}$ are zero. On the other hand, $a_{0,0}^{2} + b_{0,0}^{2}\neq 0$, thus the main segment is horizontal and the statement is true.
\end{proof}

To end this section, we remark that when the divisor is locally given by $\mathbb{E}_{xy}$ we cannot have an elementary constrained system with $\Delta$ intersecting the origin. Indeed, since $\Delta$ is not contained in $\mathbb{E}_{x}\cup\mathbb{E}_{y}$ and $\Delta$ is transversal to $\mathbb{E}_{x}$ and $\mathbb{E}_{y}$ simultaneously, it follows that $c_{1,0}^{2} + c_{0,1}^{2} \neq 0$. Regarding that the origin in $\mathbb{E}_{xy}$ is always a hyperbolic equilibrium point of the adjoint vector field, we have that $a_{00}^{2} + b_{00}^{2} \neq 0$, hence $(1,0),(0,1)\in\mathcal{P}$ and then we must continue the resolution process.

\section{Resolution of singularities: local strategy and proof of the main Theorem}\label{sec-local-strategy}
\noindent

In this section we study the local theorem of the resolution of singularities. Firstly, we give the proof of the local theorem (which is basically the description of the algorithm of resolution of singularities) and later we give the proof of the lemmas and propositions used in the proof of Theorem \ref{teo-local-strategy}. We will focus only in the positives $x$ and $y$ charts because the computations on the negative charts are completely analogous. Remember that all singular points of the constrained differential system are isolated.

\begin{theorem}\label{teo-local-strategy}(Local Theorem)
Let $(\mathcal{M}, \mathcal{X}, \mathcal{I})$ be a 2-dimensional constrained system and let $p\in\mathcal{M}$. Fix local coordinates such that the Newton polygon of the auxiliary vector field $X_{A}$ is controllable. Then there is a finite sequence of weighted blow ups such that the strict transformed is elementary in all points of the excepcional divisor.
\end{theorem}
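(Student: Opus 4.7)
The plan is to induct on the height $s_0$ of the Newton polygon $\mathcal{P}$ of the auxiliary vector field $X_A$ at the singular point under consideration. By Theorem~\ref{teo-eq-definitions}, a Newton elementary situation (height $s_0 \leq 0$, or horizontal main segment $\gamma_1$) coincides exactly with an elementary situation, so nothing needs to be done. Hence we may assume $s_0 \geq 1$ and that $\gamma_1$ is non-horizontal. Because the polygon is controllable, the main vertex lies on the $s$-axis or on the line $r=-1$, and this ensures that the exceptional divisor created by the blow-up will be invariant by the strict transform of the adjoint vector field.

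The key step is to take the weight vector $\omega=(\omega_1,\omega_2)$ to be the primitive positive integer vector normal to the main segment $\gamma_1$, and to perform the directional blow-ups $\Phi_{i,\omega}^{\sigma}$ in both the $x$- and $y$-directions (the negative charts being handled identically). In each chart one then factors out the highest power of the exceptional variable dividing every component of the pulled-back auxiliary vector field, which amounts to a translation of the support. A direct inspection shows that the points on the new exceptional divisor which can fail to be Newton elementary are precisely the zeros of the $\omega$-quasi-homogeneous principal part $X_{A,d}^{(\omega_1,\omega_2)}$ restricted to that divisor, hence finitely many. At each such candidate, I would distinguish two cases: a \emph{regular} new singularity, lying in the interior of the newly created boundary component with adapted divisor $\mathbb{E}_x$ or $\mathbb{E}_y$, where a direct computation shows the new height is strictly less than $s_0$ so the inductive hypothesis applies; and a \emph{corner} new singularity, where the new divisor meets a pre-existing one and the height may a priori be preserved.

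The corner case is the main obstacle. To handle it, I would apply a finite sequence of admissible changes of coordinates (Lemma~\ref{lemma-change-2}); by the lemma each such change preserves the height, controllability, and invariance of the divisor while strictly increasing the slope of the main segment. Since after a bounded number of steps the finitely many vertices of the polygon exhaust the available slopes, after finitely many admissible changes either the main segment becomes horizontal, in which case Theorem~\ref{teo-eq-definitions} declares the point elementary, or the main vertex jumps vertically and the new height is strictly smaller than $s_0$, returning us to the inductive hypothesis. Overall termination is then by recursion: the height is a non-negative integer that strictly decreases each time the inductive step is invoked, each blow-up produces only finitely many new candidate singularities on the exceptional divisor, and hence the algorithm stops after finitely many weighted blow-ups, leaving the strict transform elementary at every point of the accumulated exceptional divisor.
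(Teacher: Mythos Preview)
Your dichotomy between ``regular'' and ``corner'' new singularities is inverted relative to what actually happens. In the paper's analysis, the origin of the $x$-chart (which \emph{is} the corner when a prior divisor exists) is precisely where the height is guaranteed to drop (Lemma~\ref{lemma-blow-up-x-direction}), whereas the obstruction occurs at an interior point $(0,\xi_0)$, $\xi_0\neq0$, of the new divisor $\mathbb{E}_x$ (Proposition~\ref{prop-after-blow-up}). So the claim that ``a direct computation shows the new height is strictly less than $s_0$'' at interior points is false; the hard case is exactly there, and the admissible change of coordinates is applied \emph{before} the blow-up to move that bad interior point to the origin, not to deal with a corner.

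Your termination argument for the admissible changes is also not right. Lemma~\ref{lemma-change-2} says these changes preserve the height exactly, so the main vertex never ``jumps vertically''; and the Newton polygon is altered at each step, so you cannot argue that ``the finitely many vertices exhaust the available slopes.'' The paper handles this via Proposition~\ref{prop-change-coord-infty}: if the height fails to drop after the blow-up, the first graduation has the special form of Proposition~\ref{prop-before-blow-up-expressions}, forcing $\omega_2/\omega_1\in\mathbb{N}$; one iterates admissible changes, and if the iteration does not stop finitely, the infinite composition $\Psi_\infty(x,y)=(x,\,y-\sum_i\xi_i x^{\tilde\omega_i})$ is shown to be analytic (its graph is a branch of the analytic impasse set, or one reaches a contradiction with the adjoint field being $\infty$-flat along $\Delta$), and in the new coordinates the main segment becomes horizontal. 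This analyticity step is the missing idea in your outline.
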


\begin{proof}
As usual, denote by $X$ the adjoint vector field in such local coordinates and by $\Delta$ the impasse set. Let $X_{A}$ be the auxiliary vector field. Firstly, we find the expression of the affine line $\{r\omega_{1} + s\omega_{2} = R\}$ that contains the main segment $\gamma_{1}$. If $\gamma_{1}$ is horizontal, we are done by Theorem \ref{teo-eq-definitions}. If this is not the case, denote the height of the Newton polygon by $h$ and define the weight vector as $\omega = (\omega_{1},\omega_{2})$. After a weighted blow-up in the the $y$ direction, it follows by Lemma \ref{lemma-blow-up-y-direction} that the origin of the excepcional divisor $\mathbb{E}_{y}$ is elementary. In other words, after just one weighted blow up to in the $y$ direction the origin of $\mathbb{E}_{y}$ is elementary.

Blowing up the point $p$ in the $x$ direction, by Lemma \ref{lemma-blow-up-x-direction} the new height $h_{1}$ of the Newton polygon in the origin of the divisor $\mathbb{E}_{x}$ is less than $h$, that is, the height decreased. Now, we need to study the other points of $\mathbb{E}_{x}$. Due to Proposition \ref{prop-after-blow-up}, there are two possibilities:
\begin{enumerate}
  \item There is a finite number $N$ of points $p_{i}$, $1\leq i \leq N$ in the exceptional divisor such that the height $h_{p_{i}}$ of the Newton polygon at $p_{i}$ is positive. Moreover, $h_{p_{i}} \leq h$ for all $i$.
  \item If there is a point in $\mathbb{E}_{x}$ such that $h_{p_{i}} = h$, this is the only point in the divisor that has positive height.
\end{enumerate}

In the first case, there is a finite number of points $p_{i}$ in the divisor such that or $p_{i}$ is not a semi-hyperbolic equilibrium point of the adjoint vector field or the impasse set intersects the divisor at $p_{i}$. Eventually, a point on the divisor can be an equilibrium point and an impasse point at the same time. We must continue blowing up the points in which the height is positive and the main segment is not horizontal. The weights are chosen in the same way as before.

For the second case, we remark that this point cannot be the origin. Indeed, we show that in this case the first graduation of the auxiliary vector field is very particular (see Proposition \ref{prop-before-blow-up-expressions}). So we apply an admissible change of coordinates, such as in Lemma \ref{lemma-change-2}. Such change of coordinates does not increase the height of the polygon, but it increases the slope of the main segment. Now we obtain a new weight vector to continue the process, due to Proposition \ref{prop-change-coord-infty}.

At each step, we have the same two possibilities. Since there is a finite number of non-elementary points on the divisor, the height is finite and at each step the height decreases (eventually applying admissible change of coordinates), after a finite number $n_{0}$ of blowing ups we have that all points on the divisor do not have positive height or the main segment is horizontal, that is, all points are elementary.

The worst scenario is when a finite composition of admissible change of coordinates is not sufficient to obtain a weight vector such that the blow-up such vector decreases the height. In this case, there is an analytic change of coordinates such that, in this new system of coordinates, the main segment is horizontal and then we finish the resolution process (see Proposition \ref{prop-change-coord-infty}).
\end{proof}

In the next subsection we show the lemmas and propositions used in the previous proof. Before such study, we discuss some similarities and differences between our approach and that one in \cite{Pelletier}.

Just as in the work of Pelletier \cite{Pelletier}, our proof is based on weighted blow ups and the Newton polygon. Therefore, many ideas presented here can be found in such work, for example, the behavior of the Newton polygon during the resolution process. Nevertheless, some definitions were inspired in \cite{Panazzolo}.

We recall that in \cite{Pelletier} the author considered analytic vector fields such that the origin in an isolated equilibrium point. Furthermore, in such paper the author proved that applying a finite sequence of blowing-ups in a singularity, we decrease the so called total multiplicity of a singularity (eventually applying some change of coordinates).

However, in this work we consider the Newton polygon of the auxiliary vector field, which is an analytic vector field that has a curve of equilibrium points. Here, we show that applying a finite sequence of blowing-ups in a singularity, we decrease the height of the Newton polygon (also applying some change of coordinates when necessary). In this sense, some ideas of our approach were inspired in \cite{BelottoPanazzolo}.

\subsection{The local strategy}
\noindent

From now on, fix local coordinates for the constrained differential system $(\mathcal{M},\mathcal{X},\mathcal{I})$, denote the local vector field by $X$ and the impasse set by $\Delta$. Let $X_{A}$ be the auxiliary vector field \eqref{eq-aux-vec-field} and assume that the Newton polygon is controllable and the main segment $\gamma_{1}$ is not horizontal. Recall that the weight vector $\omega = (\omega_{1}, \omega_{2})$ is chosen by the expression of the affine line $\{r\omega_{1} + s\omega_{2} = R\}$ that contains the main segment $\gamma_{1}$.

\begin{lemma}
After a blow-up with weight $\omega = (\omega_{1}, \omega_{2})$, the auxiliary vector field \eqref{eq-aux-vec-field} takes the form
\begin{equation}\label{lemma-eq-blowup-x}
\scriptstyle \widetilde{X}_{A}(\tilde{x},\tilde{y}) = \Bigg{(}\sum c_{k,l}\tilde{x}^{k\omega_{1} + l\omega_{2}}\tilde{y}^{l}\Bigg{)}\Bigg{(}\sum\tilde{x}^{m\omega_{1} + n\omega_{2}}\tilde{y}^{n}\big{(} \frac{a_{m,n}}{\omega_{1}}\tilde{x}\frac{\partial}{\partial \tilde{x}} + (b_{m,n} -\frac{\omega_{2}}{\omega_{1}}a_{m,n})\tilde{y}\frac{\partial}{\partial \tilde{y}}\big{)}\Bigg{)}
\end{equation}
\normalsize
in the $x$ direction and
\begin{equation}\label{lemma-eq-blowup-y}
\begin{aligned}
\scriptstyle \widetilde{X}_{A}(\tilde{x},\tilde{y}) = \Bigg{(}\sum c_{k,l}\tilde{x}^{k}\tilde{y}^{k\omega_{1} + l\omega_{2}}\Bigg{)}\Bigg{(}\sum \tilde{x}^{m}\tilde{y}^{m\omega_{1} + n\omega_{2}}\big{(}(a_{m,n} - \frac{\omega_{1}}{\omega_{2}}b_{m,n})\tilde{x}\frac{\partial}{\partial \tilde{x}} + \frac{b_{m,n}}{\omega_{2}}\tilde{y}\frac{\partial}{\partial \tilde{y}}\big{)}\Bigg{)}
\end{aligned}
\end{equation}
\normalsize
in the $y$ direction.
\end{lemma}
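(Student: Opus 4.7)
The plan is a direct pullback computation in each of the two directional charts, exploiting the fact that $X_{A}$ is already written in the logarithmic basis $\{x\partial_{x},y\partial_{y}\}$ and that these derivations transform into simple affine combinations of $\tilde{x}\partial_{\tilde{x}}$ and $\tilde{y}\partial_{\tilde{y}}$ under a weighted blow up. All information then passes through routine monomial bookkeeping.

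First I would record the local expression of the chart maps: in the positive $x$-chart one has $\Phi^{+}_{1,\omega}(\tilde{x},\tilde{y})=(\tilde{x}^{\omega_{1}},\tilde{x}^{\omega_{2}}\tilde{y})$, and in the positive $y$-chart $\Phi^{+}_{2,\omega}(\tilde{x},\tilde{y})=(\tilde{y}^{\omega_{1}}\tilde{x},\tilde{y}^{\omega_{2}})$. The induced substitutions on the monomial factors are immediate: in the $x$-chart, $x^{k}y^{l}\mapsto \tilde{x}^{k\omega_{1}+l\omega_{2}}\tilde{y}^{l}$ and $x^{m}y^{n}\mapsto \tilde{x}^{m\omega_{1}+n\omega_{2}}\tilde{y}^{n}$, with the symmetric identities in the $y$-chart. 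These already account for all the scalar factors appearing in \eqref{lemma-eq-blowup-x} and \eqref{lemma-eq-blowup-y}; in particular the pullback of $\delta$ enters only as the factor $\sum c_{k,l}\tilde{x}^{k\omega_{1}+l\omega_{2}}\tilde{y}^{l}$ placed in front.

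Next I would pull back the derivations. Inverting the Jacobian of $\Phi^{+}_{1,\omega}$ yields $x\partial_{x} = \tfrac{1}{\omega_{1}}\tilde{x}\partial_{\tilde{x}} - \tfrac{\omega_{2}}{\omega_{1}}\tilde{y}\partial_{\tilde{y}}$ and $y\partial_{y} = \tilde{y}\partial_{\tilde{y}}$; the inverse Jacobian of $\Phi^{+}_{2,\omega}$ gives the symmetric pair $x\partial_{x} = \tilde{x}\partial_{\tilde{x}}$ and $y\partial_{y} = -\tfrac{\omega_{1}}{\omega_{2}}\tilde{x}\partial_{\tilde{x}} + \tfrac{1}{\omega_{2}}\tilde{y}\partial_{\tilde{y}}$. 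Plugging these identities together with the monomial substitutions above into the definition \eqref{eq-aux-vec-field} of $X_{A}$ and collecting the coefficients of $\tilde{x}\partial_{\tilde{x}}$ and $\tilde{y}\partial_{\tilde{y}}$ term by term reproduces the formulas of the lemma exactly: in the $x$-chart the $a_{m,n}$-terms contribute $\tfrac{a_{m,n}}{\omega_{1}}$ to the $\tilde{x}\partial_{\tilde{x}}$ coefficient and $-\tfrac{\omega_{2}}{\omega_{1}}a_{m,n}$ to the $\tilde{y}\partial_{\tilde{y}}$ coefficient, while the $b_{m,n}$-terms contribute only to the $\tilde{y}\partial_{\tilde{y}}$ coefficient; an entirely parallel computation handles the $y$-chart.

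No conceptual obstacle arises; the statement is a bookkeeping identity whose value lies in making the subsequent Newton-polygon analysis transparent, and the only delicate point is keeping the exponent arithmetic straight. One minor aspect worth flagging for the reader is that the displayed expression is the raw pullback of $X_{A}$ as an analytic vector field on $\widetilde{\mathcal{M}}$, so typically a common monomial factor coming from the exceptional divisor still sits in front; the strict transform used later in the resolution process is obtained by dividing out that factor, exactly as illustrated in the worked example of Section~\ref{sec-main-tools}.
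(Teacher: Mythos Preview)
Your proposal is correct and follows essentially the same direct change-of-variables computation as the paper. The only organizational difference is that the paper transforms the constrained system $\delta\dot{x}=P$, $\delta\dot{y}=Q$ via the Jacobian relation between $(\dot{x},\dot{y})$ and $(\dot{\tilde{x}},\dot{\tilde{y}})$ and then reads off the auxiliary vector field of the transformed system, whereas you pull back $X_{A}$ directly using the clean transformation rules for the logarithmic derivations $x\partial_{x}$ and $y\partial_{y}$; both routes amount to the same elementary bookkeeping.
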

\begin{proof}
For the $x$ direction, observe that the blow-up
\begin{center}
$x = \tilde{x}^{\omega_{1}}$, $y = \tilde{x}^{\omega_{2}}\tilde{y}$
\end{center}
gives us the relation
$$
\left(
  \begin{array}{c}
    \dot{x} \\
    \dot{y} \\
  \end{array}
\right)
=
\left(
  \begin{array}{cc}
    \omega_{1}\tilde{x}^{\omega_{1} - 1} & 0 \\
    \omega_{2}\tilde{x}^{\omega_{2} - 1}\tilde{y} & \tilde{x}^{\omega_{2}} \\
  \end{array}
\right)
\left(
  \begin{array}{c}
    \dot{\tilde{x}} \\
    \dot{\tilde{y}} \\
  \end{array}
\right),
$$
and \eqref{eq-impasse-2} becomes the constrained system
$$
\left\{
  \begin{array}{rcl}
    \delta(\tilde{x}^{\omega_{1}}, \tilde{x}^{\omega_{2}}\tilde{y})\dot{\tilde{x}} & = & \displaystyle\sum\tilde{x}^{m\omega_{1} + n\omega_{2}}\tilde{y}^{n}\frac{a_{m,n}}{\omega_{1}} \tilde{x}, \\
    \delta(\tilde{x}^{\omega_{1}}, \tilde{x}^{\omega_{2}}\tilde{y})\dot{\tilde{y}} & = & \displaystyle\sum\tilde{x}^{m\omega_{1} + n\omega_{2}}\tilde{y}^{n}\big{(}b_{m,n} -\frac{\omega_{2}}{\omega_{1}}a_{m,n}\big{)}\tilde{y},
  \end{array}
\right.
$$
whose auxiliary vector field is of the form \eqref{lemma-eq-blowup-x}. Analogously we obtain the expression \eqref{lemma-eq-blowup-y} in the $y$ direction.
\end{proof}

\begin{lemma}\label{lemma-blow-up-y-direction}
After a blow-up in the $y$ direction, the Newton polygon is still controllable, the divisor $\mathbb{E}_{y}$ is invariant by the adjoint vector field $X$ and the origin is an elementary point of the constrained system.
\end{lemma}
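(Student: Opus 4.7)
The plan is to read off the new Newton polygon directly from formula~\eqref{lemma-eq-blowup-y} together with the defining property of the weight vector. Since $\omega=(\omega_1,\omega_2)$ is normal to the affine line containing the main segment $\gamma_1$, the number $R$ coincides with $\min_{(r,s)\in\mathcal{Q}}(r\omega_1+s\omega_2)$, with this minimum attained precisely on $\gamma_1$.

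First I would expand the product in~\eqref{lemma-eq-blowup-y}: each pair $(k,l)$ in the support of $\delta$ combined with each pair $(m,n)$ in the logarithmic support of $X$ produces a monomial proportional to $\tilde{x}^{k+m}\tilde{y}^{(k+m)\omega_1+(l+n)\omega_2}$. Writing $(r,s)=(k+m,l+n)$, which ranges over $\mathcal{Q}$, the minimum power of $\tilde{y}$ appearing in $\widetilde{X}_A$ equals $R$. Taking the strict transform amounts to factoring out $\tilde{y}^R$, after which every point $(r,s)\in\mathcal{Q}$ contributes a new support point $(r,\,r\omega_1+s\omega_2-R)$ to the Newton polygon of $\widetilde{X}_A$; the second coordinate is non-negative, and vanishes precisely when $(r,s)\in\gamma_1$.

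Next I would identify the new main vertex. The first coordinate is preserved by the transformation, so the new minimal $r$-coordinate is still $r_0\in\{-1,0\}$, where $v_0=(r_0,s_0)$ is the old main vertex; this is exactly where the controllability hypothesis is used. Among the old support points with $r=r_0$ the smallest second coordinate is $s_0$, attained at $v_0$ itself, and since $v_0\in\gamma_1$ it is sent to $(r_0,0)$. Hence the new main vertex is $(-1,0)$ or $(0,0)$, which simultaneously guarantees that the transformed polygon is controllable and Newton elementary in the sense of Definition~\ref{def-desing1}. Invoking Theorem~\ref{teo-eq-definitions} then yields that the origin of the $y$-chart is elementary.

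For the divisor invariance I would argue directly from the logarithmic form of the adjoint vector field encoded in~\eqref{lemma-eq-blowup-y}: its $\partial_{\tilde{y}}$-component carries an explicit factor $\tilde{y}$, and the division by the common power of $\tilde{y}$ defining the strict transform acts uniformly on both components, leaving the explicit $\tilde{y}$ in front of $\partial_{\tilde{y}}$ intact. Therefore the $\tilde{y}$-component of the transformed adjoint vector field vanishes on $\{\tilde{y}=0\}$, so $\mathbb{E}_y$ is invariant. No substantive obstacle is expected here: the lemma is a bookkeeping consequence of the optimal choice of weights along $\gamma_1$, and the genuinely delicate content of the algorithm sits in the subsequent blow-ups in the $x$ direction, where the height may fail to decrease and an admissible change of coordinates may need to be invoked.
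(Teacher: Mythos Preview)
Your proposal is correct and follows essentially the same route as the paper: both arguments read off from formula~\eqref{lemma-eq-blowup-y} that a support point $(r,s)$ is carried to $(r,\,r\omega_{1}+s\omega_{2})$, divide by the minimal $\tilde{y}$-power $R$ (the paper calls it $d$), observe that the first coordinates are preserved so the main vertex lands at $(-1,0)$ or $(0,0)$, and then invoke Theorem~\ref{teo-eq-definitions}. Your treatment is slightly more thorough in that you spell out the divisor-invariance claim from the logarithmic form of the $\partial_{\tilde{y}}$-component, which the paper leaves implicit.
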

\begin{proof}
Take the first level of the $(\omega_{1},\omega_{2})$-graduation
$$\scriptstyle X_{A,d}^{(\omega_{1},\omega_{2})}(x,y) = \Bigg{(}\sum_{\omega_{1} k + \omega_{2} l = d_{1}} c_{k,l}x^{k}y^{l}  \Bigg{)} \Bigg{(}\sum_{\omega_{1} m + \omega_{2} n = d_{2}} x^{m}y^{n}\Big{(}a_{m,n}x\frac{\partial}{\partial x} +  b_{m,n}y\frac{\partial}{\partial y}\Big{)}\Bigg{)},$$
where $d_{1} + d_{2} = d$. After a blow up in the $y$ direction the auxiliary vector field takes the from
$$
\scriptsize
\begin{aligned}
\widetilde{X}_{A}(\tilde{x},\tilde{y}) = {} &
\Bigg{(}\sum c_{k,l}\tilde{x}^{k}\tilde{y}^{k\omega_{1} + l\omega_{2}}\Bigg{)}\Bigg{(}\sum \tilde{x}^{m}\tilde{y}^{m\omega_{1} + n\omega_{2}}\big{(}(a_{m,n} - \frac{\omega_{1}}{\omega_{2}}b_{m,n})\tilde{x}\frac{\partial}{\partial \tilde{x}} + \frac{b_{m,n}}{\omega_{2}}\tilde{y}\frac{\partial}{\partial \tilde{y}}\big{)}\Bigg{)} = \\
& \Bigg{(}\sum c_{k,l}\tilde{x}^{k}\tilde{y}^{d_{1}}\Bigg{)}\Bigg{(}\sum \tilde{x}^{m}\tilde{y}^{d_{2}}\big{(}(a_{m,n} - \frac{\omega_{1}}{\omega_{2}}b_{m,n})\tilde{x}\frac{\partial}{\partial \tilde{x}} + \frac{b_{m,n}}{\omega_{2}}\tilde{y}\frac{\partial}{\partial \tilde{y}}\big{)}\Bigg{)}.
\end{aligned}
$$

In the $(r,s)$-plane, this operation sends lines of the form $\{r\omega_{1} + s\omega_{2} = R\}$ into horizonal lines. In other words, the main segment $\gamma_{1}$ is sent to a horizontal segment. Dividing the expression above by $\tilde{y}^{d}$, we move this horizontal segment to the $r$-axis (see Figure \ref{fig-lemma-blow-up-y}). Observe that the first entries of the points of the polygon do not change. In these new coordinates, the main vertex of the Newton polygon ate the origin is $(-1,0)$ or $(0,0)$. By Theorem \ref{teo-eq-definitions}, the origin is now an elementary point.
\end{proof}
\begin{figure}[h]
  \center{\includegraphics[width=0.75\textwidth]{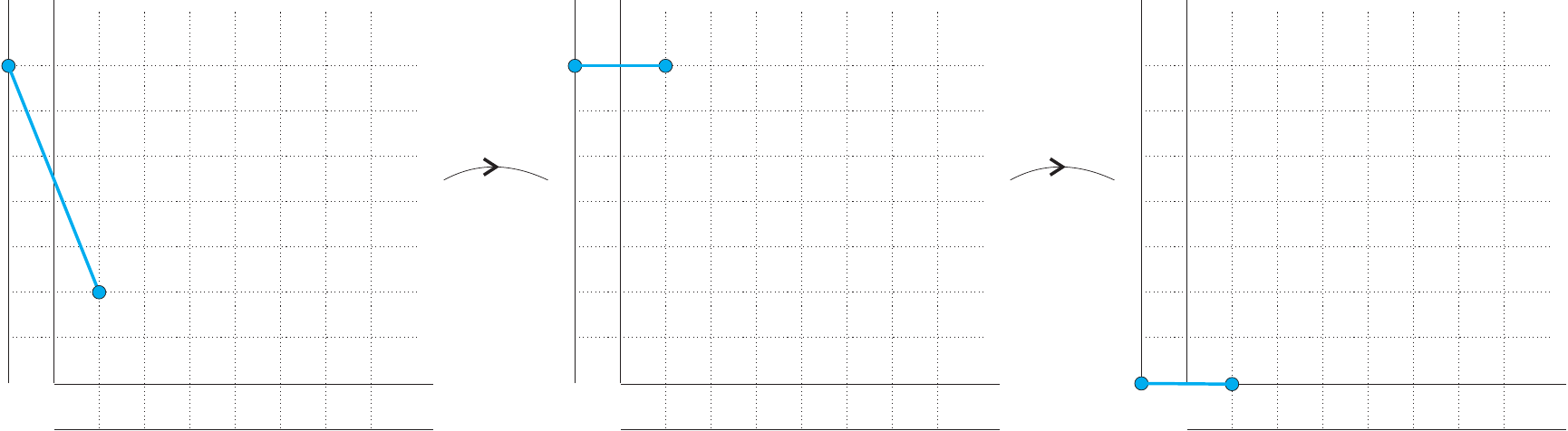}}\\
  \caption{Main segment of the Newton polygon after a blow up at the $y$ direction.}\label{fig-lemma-blow-up-y}
\end{figure}

\begin{lemma}\label{lemma-blow-up-x-direction}
After a blow-up in the $x$ direction, the main vertex is contained in $\{0\}\times\mathbb{N}$ (and therefore the Newton polygon is still controllable), the divisor $\mathbb{E}_{x}$ is invariant by the adjoint vector field $X$. Moreover, the height of the Newton polygon decreases.
\end{lemma}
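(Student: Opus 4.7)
The plan is to use the formula \eqref{lemma-eq-blowup-x} for $\widetilde{X}_A$ and track the fate of the support $\mathcal{Q}$. Under the blow-up with weight $(\omega_1,\omega_2)$ in the $x$ direction, a support point $(k+m,l+n)\in\mathcal{Q}$ is sent to the point $\bigl((k+m)\omega_1+(l+n)\omega_2,\,l+n\bigr)$ in the new plane of powers, so the linear form $\omega_1 r+\omega_2 s$ defining the main segment $\gamma_1$ becomes precisely the first coordinate. Since $\mathcal{Q}$ lies in the half-plane $\{\omega_1 r+\omega_2 s\geq R\}$, with equality exactly on $\gamma_1\cap\mathcal{Q}$, its image lies in $\{\tilde r\geq R\}$ with equality on the image of $\gamma_1\cap\mathcal{Q}$. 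Dividing by the common factor $\tilde x^R$ shifts the bound to $\{\tilde r\geq 0\}$, so the new main vertex lies in $\{0\}\times\mathbb{N}$ and controllability is preserved.

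For the strict decrease of the height, let $v_1=(r_1,s_1)$ be the second endpoint of $\gamma_1$, so that $s_1<s_0=h$; note that $v_1\in\mathcal{Q}$ because vertices of a Newton polygon are always support points. After blow-up and division, $v_1$ lands at $(0,s_1)$. I would verify that this point actually belongs to $\widetilde{\mathcal{Q}}$ by reading off from \eqref{lemma-eq-blowup-x} that its $\tilde x\partial/\partial\tilde x$-coefficient equals $\omega_1^{-1}\sum c_{k,l}a_{m,n}$ and its $\tilde y\partial/\partial\tilde y$-coefficient equals $\sum c_{k,l}b_{m,n}-(\omega_2/\omega_1)\sum c_{k,l}a_{m,n}$, where the sums range over pairs $(k,l),(m,n)$ with $k+m=r_1$ and $l+n=s_1$. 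Simultaneous vanishing would force the original Minkowski sums $\sum c_{k,l}a_{m,n}$ and $\sum c_{k,l}b_{m,n}$ to vanish, contradicting $v_1\in\mathcal{Q}$ for $X_A$. Hence $(0,s_1)\in\widetilde{\mathcal{Q}}$ and the new height is at most $s_1<h$.

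The invariance of $\mathbb{E}_x=\{\tilde x=0\}$ follows from the logarithmic form of \eqref{lemma-eq-blowup-x}: the $\partial/\partial\tilde x$-component of $\widetilde{X}_A$ carries an explicit factor of $\tilde x$, and its lowest $\tilde x$-order is $R+1$, exactly one more than the lowest $\tilde x$-order of the $\partial/\partial\tilde y$-component (this lowest order being attained, by the non-cancellation reduction above, on the image of $\gamma_1$). Consequently, dividing by $\tilde x^R$ preserves one factor of $\tilde x$ in the $\partial/\partial\tilde x$-coefficient, and the strict transform of the adjoint vector field annihilates $\tilde x$ on $\{\tilde x=0\}$.

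The most delicate step is the non-cancellation argument at the image of $v_1$, together with the simultaneous verification that the common factor of $\widetilde{X}_A$ is exactly $\tilde x^R$ rather than a higher power. Both reductions rest on the same linearity observation, which ultimately uses only the fact that at least one of the Minkowski sums $\sum c_{k,l}a_{m,n}$ or $\sum c_{k,l}b_{m,n}$ is nonzero, i.e.\ the defining property of $v_1$ being a vertex of $\mathcal{P}$. Everything else is a direct reading of the blow-up formula.
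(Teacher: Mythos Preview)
Your proof is correct and follows essentially the same line as the paper's: both track how the linear form $\omega_1 r+\omega_2 s$ becomes the new first coordinate under the $x$-directional blow-up, so that $\gamma_1$ is sent to a vertical segment which is then translated to $\{0\}\times\mathbb{N}$ by dividing out $\tilde x^R$, and both conclude the height drops because the lower endpoint $v_1$ of $\gamma_1$ lands at $(0,s_1)$ with $s_1<h$. Your treatment is in fact more careful than the paper's on one point: you explicitly check the non-cancellation at the image of $v_1$ (via the invertible linear transformation $(a,b)\mapsto(a/\omega_1,\,b-(\omega_2/\omega_1)a)$ on the coefficient pairs), whereas the paper simply asserts that $v_1$ persists without justification.
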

\begin{proof}
We recall that the choice of the weight vector depends on the equation of the main segment $\gamma_{1}$. Once the choice is made, we take the first level of the $(\omega_{1},\omega_{2})$-graduation
$$\scriptstyle X_{A,d}^{(\omega_{1},\omega_{2})}(x,y) = \Bigg{(}\sum_{\omega_{1} k + \omega_{2} l = d_{1}} c_{k,l}x^{k}y^{l}  \Bigg{)} \Bigg{(}\sum_{\omega_{1} m + \omega_{2} n = d_{2}} x^{m}y^{n}\Big{(}a_{m,n}x\frac{\partial}{\partial x} +  b_{m,n}y\frac{\partial}{\partial y}\Big{)}\Bigg{)},$$
where $d_{1} + d_{2} = d$.

Firstly, we simplify the notation. It is sufficient to study the first level of the graduation. For each $l$, there is only one $k$ such that $\omega_{1} k + \omega_{2} l = d_{1}$. Analogously, for each $n$, there is only one $m$ such that $\omega_{1} m + \omega_{2} n = d_{2}$. So we rewrite the last expression as
$$\scriptstyle X_{A,d}^{(\omega_{1},\omega_{2})}(x,y) = \Bigg{(}\sum_{l = 0}^{r_{1}} c_{k,l}x^{k}y^{l}  \Bigg{)} \Bigg{(}\sum_{n = -1}^{r_{2}} x^{m}y^{n}\Big{(}a_{m,n}x\frac{\partial}{\partial x} +  b_{m,n}y\frac{\partial}{\partial y}\Big{)}\Bigg{)},$$
where $\omega_{2}r_{1} = d_{1}$ and $\omega_{2}r_{2} = d_{2}$ (if the main vertex is contained in $\{0\}\times\mathbb{N}$) or $\omega_{2}r_{2} = d_{2} + \omega_{1}$ (if the main vertex is contained in $\{-1\}\times\mathbb{N}$). From \eqref{lemma-eq-blowup-x}, after a blow-up in the $x$ direction, we have
$$
\scriptstyle
\Bigg{(}\sum_{l = 0}^{r_{1}} c_{k,l}\tilde{x}^{\omega_{1} k + \omega_{2} l}\tilde{y}^{l}  \Bigg{)} \Bigg{(}\sum_{n = -1}^{r_{2}}\tilde{x}^{\omega_{1} m + \omega_{2} n}\tilde{y}^{n}\Big{(}\frac{a_{m,n}}{\omega_{1}}\tilde{x}\frac{\partial}{\partial \tilde{x}} +  (b_{m,n} - \frac{\omega_{2}}{\omega_{1}}a_{m,n})\tilde{y}\frac{\partial}{\partial \tilde{y}}\Big{)}\Bigg{)}.
$$

Concerning the Newton polygon, this operation sends lines of the form $\{r\omega_{1} + s\omega_{2} = R\}$ into vertical lines. Moreover, the second entries of the points of the polygon do not change. Therefore, $\gamma_{1}$ is sent into a vertical segment. Now, multiplying the auxiliary vector field by $\tilde{x}^{-d}$, we get
\begin{equation}\label{eq-aux-vec-field-after-blowup}
\scriptstyle \Bigg{(}\sum_{l = 0}^{r_{1}} c_{k,l}\tilde{y}^{l}  \Bigg{)} \Bigg{(}\sum_{n = -1}^{r_{2}}\tilde{y}^{n}\Big{(}\frac{a_{m,n}}{\omega_{1}}\tilde{x}\frac{\partial}{\partial \tilde{x}} +  (b_{m,n} - \frac{\omega_{2}}{\omega_{1}}a_{m,n})\tilde{y}\frac{\partial}{\partial \tilde{y}}\Big{)}\Bigg{)}.
\end{equation}

This operation moves the vertical lines to the left. In particular, the main segment $\gamma_{1}$ is now contained in $\{ 0 \}\times \mathbb{N}$ (see Figure \ref{fig-lemma-blow-up-x}). Hence in this new coordinate system the main vertex has the form $(0,r)$ and the Newton polygon is controllable. Moreover, the divisor $\mathbb{E}_{x}$ is invariant by the adjoint vector field. Finally, since there is a vertex $v_{1} = (i_{1},j_{1})\in\gamma_{1}$ where $j_{1}$ is smaller than the ordinate the old main vertex $v_{0}$, the height of the Newton polygon decreased.
\end{proof}
\begin{figure}[h]
  \center{\includegraphics[width=0.75\textwidth]{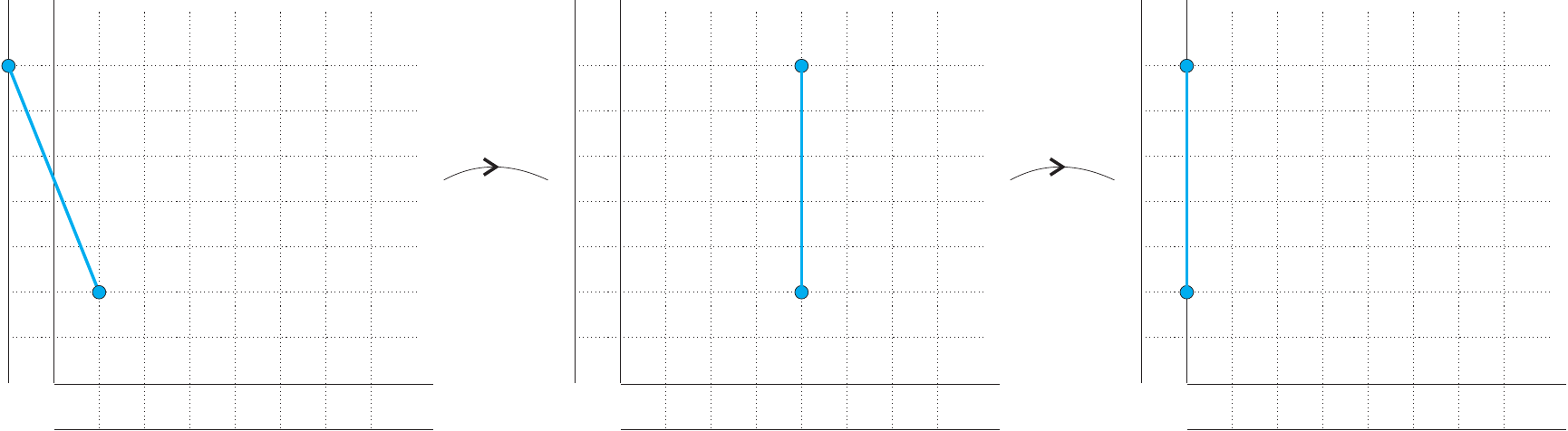}}\\
  \caption{Main segment of the Newton polygon after a blow up at the $x$ direction.}\label{fig-lemma-blow-up-x}
\end{figure}

Now, we must study what happens to the other points of the exceptional divisor.

\begin{lemma}\label{lemma-change-3}
Consider the auxiliary vector field \eqref{eq-aux-vec-field-after-blowup}, which is the auxiliary vector field after a blow-up in the $x$ direction. After a translation of the form
\begin{equation}\label{eq-lemma-change-3}
\Omega(\bar{x},\bar{y}) = (\bar{x},\bar{y} + \xi),
\end{equation}
where $\xi\in\mathbb{R}$, the auxiliary vector field takes the form (dropping the tildes and bars in order to simplify the notation)
\begin{equation}\label{eq-lemma-translation}
\scriptstyle X_{A}(x,y) = \Bigg{(}\sum_{i = 0}^{r_{1}}C_{i}(\xi)y^{i}\Bigg{)}\Bigg{(}\sum_{j = 0}^{r_{2}}\alpha_{j}(\xi)y^{j}x\frac{\partial}{\partial x} + \sum_{j = -1}^{r_{2}}\beta_{j}(\xi)y^{j}y\frac{\partial}{\partial y}\Bigg{)},
\end{equation}
where
\begin{center}
$\scriptstyle C_{i}(\xi) = \sum_{l = i}^{r_{1}}c_{l}\xi^{l-i}\binom{l}{i}$, $\scriptstyle \alpha_{j}(\xi) = \sum_{n = j}^{r_{2}}\frac{a_{n}}{\omega_{1}}\xi^{n-j}\binom{n}{j}$ and $\scriptstyle \beta_{j}(\xi) = \sum_{n = j}^{r_{2}}(b_{n} - \frac{\omega_{2}}{\omega_{1}}a_{n})\xi^{n-j}\binom{n+1}{j+1}$,
\end{center}
and such constants satisfy:
\begin{center}
$\scriptstyle \frac{\partial}{\partial \xi}C_{i}(\xi) = (i + 1)C_{i+1}(\xi)$, $\scriptstyle \frac{\partial}{\partial \xi}\alpha_{j}(\xi) = (j + 1)\alpha_{j+1}(\xi)$ and $\scriptstyle \frac{\partial}{\partial \xi}\beta_{j}(\xi) = (j + 2)\alpha_{j+1}(\xi)$.
\end{center}

Moreover, after this change of coordinates, the height of the Newton polygon does not increase, the main vertex is contained in $\{0\}\times\mathbb{N}$ and the exceptional divisor is still invariant by the adjoint vector field.
\end{lemma}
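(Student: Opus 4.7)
The plan is a direct bookkeeping computation: substitute $y = \bar{y} + \xi$ into the post-blow-up auxiliary vector field \eqref{eq-aux-vec-field-after-blowup}, expand each factor by the binomial theorem, and collect coefficients by powers of $\bar{y}$. For the impasse factor, each monomial $c_{l}\,y^{l}$ becomes $c_{l}\sum_{i=0}^{l}\binom{l}{i}\xi^{l-i}\bar{y}^{i}$; interchanging the two summations gives $\sum_{i} C_{i}(\xi)\bar{y}^{i}$ with precisely the stated $C_{i}$. The same expansion applied to $\sum_{n}\frac{a_{n}}{\omega_{1}}y^{n}$ yields $\alpha_{j}(\xi)$. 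For the $y\,\frac{\partial}{\partial y}$ component one must take care to preserve the logarithmic basis $\bar{y}\,\frac{\partial}{\partial \bar{y}}$: the coefficient there is really $(b_{n}-\frac{\omega_{2}}{\omega_{1}}a_{n})y^{n+1}\,\frac{\partial}{\partial y}$, so after expanding $(\bar{y}+\xi)^{n+1}=\sum_{j}\binom{n+1}{j+1}\xi^{n-j}\bar{y}^{j+1}$ and factoring out one $\bar{y}$ to reconstitute the logarithmic basis, one reads off $\beta_{j}(\xi)$. This establishes \eqref{eq-lemma-translation} together with the stated coefficient formulas.

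The three differential identities follow at once from $\frac{\partial}{\partial \xi}\xi^{l-i}=(l-i)\xi^{l-i-1}$ together with the elementary binomial identities $(l-i)\binom{l}{i}=(i+1)\binom{l}{i+1}$ and $(n-j)\binom{n+1}{j+1}=(j+2)\binom{n+1}{j+2}$; each of the three identities is then just a reindexing of the resulting sum (one notes in particular that the correct identity for $\beta_{j}$ is $\frac{\partial}{\partial \xi}\beta_{j}=(j+2)\beta_{j+1}$).

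For the three geometric assertions: the map $\Omega$ fixes $\bar{x}$, so the exceptional divisor $\{\bar{x}=0\}$ is preserved as a set. The pulled-back vector field still has its first component of the form ``coefficient in $\bar{y}$'' times $\bar{x}\,\frac{\partial}{\partial \bar{x}}$, so the divisor remains invariant under the adjoint vector field. Translation in $\bar{y}$ alone does not alter the $\bar{x}$-degree of any monomial, so the set of first coordinates appearing in the support is unchanged; in particular the main vertex still has first coordinate $0$. Finally, since $(\bar{y}+\xi)^{l}$ is a polynomial of degree exactly $l$ in $\bar{y}$, each $c_{l}\,y^{l}$ produces only monomials with $\bar{y}$-exponent $\leq l$ in the same $\bar{x}$-column, so no point of the new support lies strictly above the old polygon; consequently the minimum $s$-value in the leftmost column --- that is, the height --- cannot increase. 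The only step requiring genuine care is the index shift in the $y\,\frac{\partial}{\partial y}$ component; everything else is mechanical.
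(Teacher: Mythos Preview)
Your approach is the paper's: substitute $y=\bar y+\xi$ into \eqref{eq-aux-vec-field-after-blowup}, expand each factor by the binomial theorem, interchange the order of summation to read off $C_i$, $\alpha_j$, $\beta_j$, and then verify the $\xi$-derivative identities via the binomial relations $(l-i)\binom{l}{i}=(i+1)\binom{l}{i+1}$ and $(n-j)\binom{n+1}{j+1}=(j+2)\binom{n+1}{j+2}$. Your observation that the third identity should read $\frac{\partial}{\partial\xi}\beta_j=(j+2)\beta_{j+1}$ rather than $(j+2)\alpha_{j+1}$ is correct; this is precisely what is used downstream in Proposition~\ref{prop-after-blow-up} to conclude that $\xi_0$ is a root of $\beta_{-1}$ of multiplicity $r_2+1$.

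One step should be tightened. Your height argument reads: since $(\bar y+\xi)^l$ has $\bar y$-degree $\leq l$, no new support point lies above the old polygon, hence the minimum $s$-value in the leftmost column cannot increase. But containment of the new support in the old polygon would, if anything, force the new main vertex to sit \emph{at or above} the old one, not below it; and in fact the translation can (and typically does) introduce support points with $s$-coordinate strictly smaller than the post--blow-up height $h_1$. What actually bounds the height is the word ``exactly'' in your sentence: it gives $C_{r_1}=c_{r_1}\neq 0$ and $\alpha_{r_2}^2+\beta_{r_2}^2\neq 0$, so the point $(0,r_1+r_2)$ survives in the translated support, whence the new main vertex has $s$-coordinate at most $r_1+r_2$, the original pre--blow-up height. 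That is the paper's argument, and it is the inequality the lemma is asserting; you have the ingredient but should state the conclusion this way.
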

\begin{proof}
Take the equation \eqref{eq-aux-vec-field-after-blowup} and the translation \eqref{eq-lemma-change-3}. First, we recall the following property of sum:
$$\scriptstyle
\sum_{i = k}^{n}\sum_{j = k}^{i}z_{i,j} = \sum_{j = k}^{n}\sum_{i = j}^{n}z_{i,j},$$
where, $k\leq j\leq i\leq n$ are integers. In the $x$ component of the auxiliary vector field, we have (dropping the tildes):
$$
\scriptsize
\begin{aligned}\Bigg{(} {} & \sum_{l = 0}^{r_{1}}  c_{l}(y + \xi)^{l} \Bigg{)} \Bigg{(} \sum_{n = 0}^{r_{2}}(y + \xi)^{n}\Big{(}\frac{a_{n}}{\omega_{1}}\Big{)}\Bigg{)}x = \Bigg{(}\sum_{l = 0}^{r_{1}} c_{l}\sum_{i = 0}^{l}y^{i}\xi^{l-i}\binom{l}{i}  \Bigg{)} \Bigg{(}\sum_{n = 0}^{r_{2}}\frac{a_{n}}{\omega_{1}}\sum_{j = 0}^{n}y^{j}\xi^{n-j}\binom{n}{j}\Bigg{)}x = \\
& = \Bigg{(} \sum_{i = 0}^{r_{1}}y^{i}\sum_{l = i}^{r_{1}} c_{l}\xi^{l-i}\binom{l}{i}  \Bigg{)} \Bigg{(} \sum_{j=0}^{r_{2}}y^{j}\sum_{n = j}^{r_{2}}\frac{a_{n}}{\omega_{1}}\xi^{n-j}\binom{n}{j}\Bigg{)}x = \Bigg{(}\sum_{i = 0}^{r_{1}}y^{i}C_{i}(\xi)  \Bigg{)} \Bigg{(} \sum_{j= 0}^{r_{2}}y^{j}\alpha_{j}(\xi)\Bigg{)}x.
\end{aligned}
$$

In the $y$ component, we have (again dropping the tildes):
$$
\scriptsize
\begin{aligned}
\Bigg{(} \sum_{i = 0}^{r_{1}}y^{i}C_{i}(\xi)\Bigg{)} & \Bigg{(} \sum_{n = -1}^{r_{2}}\big{(}b_{n} - \frac{\omega_{2}}{\omega_{1}}a_{n}\big{)}(y+\xi)^{n+1}\Bigg{)} = \\
& = \Bigg{(} \displaystyle\sum_{i = 0}^{r_{1}}y^{i}C_{i}(\xi)\Bigg{)} \Bigg{(}   \displaystyle\sum_{n = -1}^{r_{2}}\big{(}b_{n} - \frac{\omega_{2}}{\omega_{1}}a_{n}\big{)}(y+\xi)^{n+1}\frac{1}{y}\Bigg{)}y = \\
& = \Bigg{(} \displaystyle\sum_{i = 0}^{r_{1}}y^{i}C_{i}(\xi)\Bigg{)}  \Bigg{(}   \displaystyle\sum_{n = -1}^{r_{2}}\big{(}b_{n} - \frac{\omega_{2}}{\omega_{1}}a_{n}\big{)}\sum_{j = 0}^{n+1}y^{j-1}\xi^{n+1-j}\binom{n+1}{j}\Bigg{)}y = \\
& = \Bigg{(} \displaystyle\sum_{i = 0}^{r_{1}}y^{i}C_{i}(\xi)\Bigg{)}  \Bigg{(}   \displaystyle\sum_{N = 0}^{r_{2}}\big{(}b_{N-1} - \frac{\omega_{2}}{\omega_{1}}a_{N-1}\big{)}\sum_{j = 0}^{N}y^{j-1}\xi^{N-j}\binom{N}{j}\Bigg{)}y = \\
& = \Bigg{(} \displaystyle\sum_{i = 0}^{r_{1}}y^{i}C_{i}(\xi)\Bigg{)}  \Bigg{(}   \displaystyle\sum_{j = 0}^{r_{2}}y^{j-1}\sum_{N = j}^{r_{2}}\big{(}b_{N-1} - \frac{\omega_{2}}{\omega_{1}}a_{N-1}\big{)}\xi^{N-j}\binom{N}{j}\Bigg{)}y = \\
& = \Bigg{(} \displaystyle\sum_{i = 0}^{r_{1}}y^{i}C_{i}(\xi)\Bigg{)}  \Bigg{(}   \displaystyle\sum_{j = -1}^{r_{2}}y^{j}\sum_{n = j}^{r_{2}}\big{(}b_{n} - \frac{\omega_{2}}{\omega_{1}}a_{n}\big{)}\xi^{n-j}\binom{n+1}{j+1}\Bigg{)}y = &\\
& = \Bigg{(} \displaystyle\sum_{i = 0}^{r_{1}}y^{i}C_{i}(\xi)\Bigg{)}  \Bigg{(}   \displaystyle\sum_{j = -1}^{r_{2}}y^{j}\beta_{j}(\xi)\Bigg{)}y.
\end{aligned}
$$

Note that
$$
\scriptsize
\begin{aligned}
 &\frac{\partial}{\partial \xi}C_{i}(\xi) = \frac{\partial}{\partial \xi}\Bigg{(}c_{i} + \sum_{l = i+1}^{r_{1}}c_{l}\xi^{l-i}\binom{l}{i}\Bigg{)}  =  \sum_{l = i+1}^{r_{1}}c_{l}(l-i)\xi^{l-(i+1)}\binom{l}{i} =  \\
& = \sum_{l = i+1}^{r_{1}}c_{l}\xi^{l-(i+1)}\frac{l!}{i!(l-(i+1))!} = \sum_{l = i+1}^{r_{1}}c_{l}\xi^{l-(i+1)}\frac{(i+1)l!}{(i+1)!(l-(i+1))!} = (i+1)C_{i+1}(\xi).
\end{aligned}
$$

The other expressions involving the derivative with respect to $\xi$ can be shown analogously. Observe that $C_{r_{1}} = c_{r_{1}} \neq 0$, and
\begin{center}
$\alpha_{r_{2}} = \displaystyle\frac{a_{r_{2}}}{\omega_{1}}$ and $\beta_{r_{2}} = \big{(}b_{r_{2}} - \displaystyle\frac{\omega_{2}}{\omega_{1}}a_{r_{2}}\big{)}$
\end{center}
does not vanish simultaneously. Therefore, the height of the Newton polygon does not increases. We remark that this operation does not change the first entries of the points of the polygon. It is also clear that the exceptional divisor is invariant by the adjoint vector field.
\end{proof}

\begin{proposition}\label{prop-after-blow-up}
After a blow-up at the $x$ direction, the following statements are true:
\begin{enumerate}
  \item The impasse set $\Delta$ intersects the exceptional divisor only in a finite number of points;
  \item There is a finite number of semi-hyperbolic equilibrium points of the adjoint vector field $X$ contained in the exceptional divisor;
  \item If there is a point such that the height has not decreased, this is the only point on the divisor that has positive height.
\end{enumerate}
\end{proposition}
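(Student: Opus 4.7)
For items (1) and (2), my plan is to argue directly from the structure of the post-blow-up expression \eqref{eq-aux-vec-field-after-blowup}. After dividing the blown-up auxiliary vector field by $\tilde x^{d}$, its restriction to the exceptional divisor $\{\tilde x = 0\}$ is the product of two nontrivial polynomials in $\tilde y$: one coming from the main segment of $\delta$ (the factor $\sum_{l=0}^{r_1} c_{k,l}\tilde y^l$) and one from the main segment of the adjoint field $X$. The impasse set meets the divisor exactly at the real zeros of the first polynomial, a finite set because its degree is at most $r_1$; this gives (1). For (2), the equilibria of the strict transform on the divisor are the common real zeros in $\tilde y$ of the two polynomials obtained from the $\tilde x$- and $\tilde y$-components of the second factor. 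The foliation hypothesis that $P$ and $Q$ share no common factor survives the blow-up and forces these two polynomials not to vanish simultaneously on a continuum, so their common zero set is finite.

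For item (3), I would fix any $\xi \neq 0$ on the divisor, apply the translation of Lemma \ref{lemma-change-3}, and exploit the algebraic nature of the coefficients $C_i(\xi), \alpha_j(\xi), \beta_j(\xi)$. Using the recursion $\partial_\xi C_i = (i+1)C_{i+1}$ recorded in that lemma (and its analogues for $\alpha_j$ and $\beta_j$), one identifies
$$C_i(\xi) = \frac{1}{i!}\left(\sum_{l=0}^{r_1} c_l y^l\right)^{(i)}\!\!(\xi),$$
with similar formulas for $\alpha_j(\xi)$ and $\beta_j(\xi)$. Hence the smallest $i$ with $C_i(\xi) \neq 0$ is exactly the multiplicity of $\xi$ as a real root of the impasse polynomial, and analogously for $\alpha,\beta$. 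The height of the Newton polygon at $\xi$, read off from \eqref{eq-lemma-translation}, is the smallest $i+j$ producing a nonzero term; if this height has not decreased, then it is still $\geq h$, so $\xi$ must be a common root of total multiplicity $h$ of the relevant polynomials. Since the degrees of these polynomials (inherited from the main segment of $X_A$, whose main vertex has ordinate $h$ and whose support is the Minkowski sum of those of $\delta$ and $X$) sum to at most $h$, the point $\xi$ exhausts the entire multiplicity budget. Consequently, at every other $\eta \neq \xi$ on the divisor, $C_0(\eta)$ together with at least one of $\alpha_0(\eta)$ and $\beta_{-1}(\eta)$ must be nonzero, which forces the Newton polygon at $\eta$ to have non-positive height, i.e. to already be Newton-elementary by Theorem \ref{teo-eq-definitions}.

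The main obstacle I anticipate is the degree bookkeeping in (3): one must confirm that the combined degrees of the polynomials governing $C_i$, $\alpha_j$ and $\beta_j$ are controlled by the original height $h$, which requires tracking exactly how the main segment of $X_A$ projects onto the $s$-axis after the directional blow-up and after dividing by $\tilde x^{d}$. Once this accounting is in place, the proof of (3) reduces to a clean pigeonhole argument on multiplicities of roots of univariate polynomials, while (1) and (2) follow by inspection of the leading data on the divisor.
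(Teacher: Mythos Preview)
Your plans for items (1) and (3) are essentially the paper's own argument. The paper reads off the impasse intersections as the roots of $C_{0}(\xi)=\sum_{l}c_{l}\xi^{l}$, and for (3) it writes out exactly your pigeonhole: if the height does not drop at $\xi_{0}$ then $C_{0}(\xi_{0})=\cdots=C_{r_{1}-1}(\xi_{0})=0$, $\alpha_{0}(\xi_{0})=\cdots=\alpha_{r_{2}-1}(\xi_{0})=0$, $\beta_{-1}(\xi_{0})=\cdots=\beta_{r_{2}-1}(\xi_{0})=0$; the derivative identities of Lemma~\ref{lemma-change-3} then make $\xi_{0}$ a root of multiplicities $r_{1}$, $r_{2}$, $r_{2}+1$ of $C_{0}$, $\alpha_{0}$, $\beta_{-1}$ respectively, hence the \emph{only} root of each, so every other point has $C_{0}$, $\alpha_{0}$, $\beta_{-1}$ all nonzero and height $\leq 0$. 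The degree bookkeeping you flag is precisely the identification $h=r_{1}+r_{2}$, which follows from reading the main vertex off the line $\{\omega_{1}r+\omega_{2}s=d\}$ together with $\omega_{2}r_{1}=d_{1}$, $\omega_{2}r_{2}=d_{2}$ (or $d_{2}+\omega_{1}$).

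Your argument for (2), however, has a real gap. On the divisor $\{\tilde{x}=0\}$ the $\tilde{x}$-component of the transformed adjoint field in \eqref{eq-aux-vec-field-after-blowup} carries the explicit factor $\tilde{x}$ and therefore vanishes identically; the equilibria on $\mathbb{E}_{x}$ are \emph{not} common zeros of two polynomials in $\tilde{y}$ but simply the zeros of the single polynomial $\beta_{-1}(\tilde{y})=\sum_{n}(b_{n}-\tfrac{\omega_{2}}{\omega_{1}}a_{n})\tilde{y}^{n+1}$. Nothing in the no-common-factor hypothesis on the original $P,Q$ prevents $\beta_{-1}\equiv 0$: that identity only constrains the coefficients on the main segment (it says $b_{n}=\tfrac{\omega_{2}}{\omega_{1}}a_{n}$ there), and the higher levels can easily keep $P$ and $Q$ coprime. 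When $\beta_{-1}\equiv 0$ the entire exceptional divisor consists of equilibria, so ``finitely many equilibria'' is false as stated and your argument collapses. The paper's fix is to observe that $\alpha_{0}$ and $\beta_{-1}$ cannot both vanish identically (otherwise every $a_{n}$ and $b_{n}$ on the main segment would be zero, contradicting the presence of the main vertex), so in the case $\beta_{-1}\equiv 0$ one has $\alpha_{0}\not\equiv 0$, and then the \emph{non-elementary} equilibria on the divisor are exactly the finitely many roots of $\alpha_{0}$. You should replace the appeal to the surviving coprimality of $P,Q$ by this dichotomy on $\beta_{-1}$.
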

\begin{proof}
From Lemma \ref{lemma-change-3}, thus the impasse set $\Delta$ is locally given by
$$\scriptstyle\sum_{i = 0}^{r_{1}}y^{i}C_{i}(\xi) =  C_{0}(\xi) + C_{1}(\xi)y + ... + C_{r_{1}}(\xi)y^{r_{1}}.$$

Observe that there is a finite number of points $(0,\xi_{k})$ on the exceptional divisor such that $C_{0}(\xi_{k}) = 0$. In other words, since there is a finite number of roots for $C_{0}(\xi)$, the impasse set $\Delta$ intersects the exceptional divisor in a finite number of points.

Concerning the adjoint vector field, let us notice that the polynomials
\begin{center}
$\scriptsize \alpha_{0}(\xi) = \sum_{n = 0}^{r_{2}}\frac{a_{n}}{\omega_{1}}\xi^{n}, \ \beta_{-1}(\xi) = \sum_{n = -1}^{r_{2}}(b_{n} - \frac{\omega_{2}}{\omega_{1}}a_{n})\xi^{n+1}$
\end{center}
are not identically zero simultaneously. Indeed, if this was the case, we would have $a_{n} = b_{n} = 0$ for all $-1 \leq n\leq r_{2}$ and it would contradict the fact that the translation does not increase the height of the Newton polygon. Hence $\beta_{-1}\equiv 0$ or $\alpha_{0}\equiv 0$.

If $\beta_{-1}\not\equiv 0$, this polynomial has a finite number of roots, thus by the expression of the auxiliary vector field \eqref{eq-lemma-translation} there is a finite number of equilibrium points $(0,\xi_{k})$ of the adjoint vector field in the exceptional divisor (eventually some of them are not elementary). If $\beta_{-1}\equiv 0$, then $\alpha_{0}\not\equiv 0$. In this case, all points of the exceptional divisor are equilibrium points of the adjoint vector field. The polynomial $\alpha_{0}$ has a finite number of roots, and then by \eqref{eq-lemma-translation} there is a finite number of points $(0,\xi_{k})$ that have positive height. In other words, there is a finite number of non elementary equilibrium points in the divisor.

Finally, we show that if there is a point in the excepcional divisor such that the height of the polygon has not decreased, this is the only point in the divisor with positive height. Let $(0,\xi_{0})$ be the point such that the height did not decrease. We recall that $\xi_{0} \neq 0$, since by Lemma \ref{lemma-blow-up-x-direction} the height at the origin decreases. Then we must have
$$
\scriptsize
\left\{
  \begin{array}{rcccccccccl}
    C_{0}(\xi_{0}) & = & ... & = & C_{r_{1} - 1}(\xi_{0}) & = & 0 & , & C_{r_{1}}(\xi_{0}) & \neq & 0;  \\
    \alpha_{0}(\xi_{0}) & = & ... & = & \alpha_{r_{2} - 1}(\xi_{0}) & = & 0 & , & \alpha_{r_{2}}(\xi_{0}) & \neq & 0; \\
    \beta_{-1}(\xi_{0}) & = & ... & = & \beta_{r_{2} - 1}(\xi_{0}) & = & 0 & , & \beta_{r_{2}}(\xi_{0}) & \neq & 0.
  \end{array}
\right.
$$

From Lemma \ref{lemma-change-3} we know that
\begin{center}
$\scriptstyle \frac{\partial}{\partial \xi}C_{i}(\xi) = (i + 1)C_{i+1}(\xi)$, $\scriptstyle \frac{\partial}{\partial \xi}\alpha_{j}(\xi) = (j + 1)\alpha_{j+1}(\xi)$ and $\scriptstyle \frac{\partial}{\partial \xi}\beta_{j}(\xi) = (j + 2)\alpha_{j+1}(\xi)$.
\end{center}

Then $\xi_{0}$ must be, simultaneously, root of multiplicity $r_{1}$ of $C_{0}$, root of multiplicity $r_{2}$ of $\alpha_{0}$ and root of multiplicity $r_{2} + 1$ of $\beta_{-1}$. By the degree of such polynomials, such root is unique, then $(0,\xi_{0})$ is the only point in the exceptional divisor with positive height.
\end{proof}

In the case where the height does not decrease, geometrically we have a point $p$ such that it is an equilibrium point of the adjoint vector field and an impasse point simultaneously.

\begin{proposition}\label{prop-before-blow-up-expressions}
Under the conditions of the Proposition \ref{prop-after-blow-up}, suppose that after a weighted blow up at the $x$ direction the height did not decrease. Then, before the blow up in the $x$ direction, the first graduation of the auxiliary vector field \eqref{eq-aux-vec-field} is of the form
\begin{equation}\label{prop-eq-before-blow-up-expressions-1}
\begin{aligned}
\scriptstyle X_{A} =
\scriptstyle \Bigg{(}c_{r_{1}}\big{(}y-\xi_{0}x^{\frac{\omega_{2}}{\omega_{1}}}\big{)}^{r_{1}}\Bigg{)}
\scriptstyle \Bigg{(}\Big{(}a_{r_{2}}\big{(}y-\xi_{0}x^{\frac{\omega_{2}}{\omega_{1}}}\big{)}^{r_{2}}\Big{)}x\frac{\partial}{\partial x} {} & \scriptstyle + \Big{(}y^{-1}\big{(}b_{r_{2}} - \frac{\omega_{2}}{\omega_{1}}a_{r_{2}}\big{)}(y-\xi_{0}x^{\frac{\omega_{2}}{\omega_{1}}})^{r_{2}+1} + \\
 & \scriptstyle + \frac{\omega_{2}}{\omega_{1}}a_{r_{2}}(y-\xi_{0}x^{\frac{\omega_{2}}{\omega_{1}}})^{r_{2}}\Big{)}y\frac{\partial}{\partial y}\Bigg{)}
\end{aligned}
\end{equation}
if the main vertex is contained in $\{0\}\times\mathbb{N}$ or
\begin{equation}\label{prop-eq-before-blow-up-expressions-2}
\scriptstyle
X_{A} = \Bigg{(}c_{r_{1}}\big{(}y-\xi_{0}x^{\frac{\omega_{2}}{\omega_{1}}}\big{)}^{r_{1}}\Bigg{)} \Bigg{(}\Big{(}a_{r_{2}}\big{(}y-\xi_{0}x^{\frac{\omega_{2}}{\omega_{1}}}\big{)}^{r_{2}}\Big{)}\frac{\partial}{\partial x} + \Big{(}-\frac{\omega_{2}}{\omega_{1}}a_{r_{2}}(-\xi_{0}x^{\frac{\omega_{2}}{\omega_{1}}-1})(y - \xi_{0}x^{\frac{\omega_{2}}{\omega_{1}}})^{r_{2}}{)}\frac{\partial}{\partial y}\Bigg{)}
\end{equation}
if the main vertex is contained in $\{-1\}\times\mathbb{N}$. In both cases, $\frac{\omega_{2}}{\omega_{1}}\in\mathbb{N}$.
\end{proposition}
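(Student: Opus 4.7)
The strategy is to reverse-engineer the first $(\omega_1,\omega_2)$-graduation of $X_A$ from the vanishing data established at the end of the proof of Proposition~\ref{prop-after-blow-up}. At the point $(0,\xi_0)$ of the exceptional divisor where the height fails to drop, $\xi_0$ is a root of $C_0$, $\alpha_0$ and $\beta_{-1}$ of multiplicities $r_1$, $r_2$ and $r_2+1$, respectively. Since these polynomials have degrees equal to their prescribed vanishing orders and non-zero leading coefficients $c_{r_1}$, $a_{r_2}/\omega_1$ and $b_{r_2}-(\omega_2/\omega_1)a_{r_2}$, they must coincide with
\begin{equation*}
C_0(\xi)=c_{r_1}(\xi-\xi_0)^{r_1},\quad \alpha_0(\xi)=\tfrac{a_{r_2}}{\omega_1}(\xi-\xi_0)^{r_2},\quad \beta_{-1}(\xi)=\Bigl(b_{r_2}-\tfrac{\omega_2}{\omega_1}a_{r_2}\Bigr)(\xi-\xi_0)^{r_2+1}.
\end{equation*}

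Matching these identities coefficient-wise against the defining sums of Lemma~\ref{lemma-change-3} recovers
\begin{equation*}
c_l=c_{r_1}\binom{r_1}{l}(-\xi_0)^{r_1-l},\qquad a_n=a_{r_2}\binom{r_2}{n}(-\xi_0)^{r_2-n},
\end{equation*}
together with a closed form for $b_n$ obtained by solving the $\beta_{-1}$-identity and applying Pascal's rule (using also the convention $b_{-1,\cdot}=0$ in the $\{-1\}\times\mathbb{N}$-case). Because $\xi_0\neq 0$ (the origin is already handled by Lemma~\ref{lemma-blow-up-x-direction}), all of these coefficients are non-zero throughout the ranges $0\le l\le r_1$ and $0\le n\le r_2$. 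In the original $\delta$-factor the coefficient $c_l$ sits in front of the monomial $x^{k_l}y^l$ with $k_l=\omega_2(r_1-l)/\omega_1$, and this exponent must be a non-negative integer; specialising to $l=r_1-1$ forces $\omega_2/\omega_1\in\mathbb{N}$, which is the final claim of the proposition.

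It then remains to reassemble the first graduation. The binomial theorem gives immediately
\begin{equation*}
\sum_{l=0}^{r_1}c_l\,x^{\omega_2(r_1-l)/\omega_1}y^l=c_{r_1}\bigl(y-\xi_0 x^{\omega_2/\omega_1}\bigr)^{r_1},
\end{equation*}
and the same identity applied to the $\partial_x$-piece produces $a_{r_2}(y-\xi_0 x^{\omega_2/\omega_1})^{r_2}$, carrying the extra factor $x$ (when the main vertex lies in $\{0\}\times\mathbb{N}$) or $1$ (when it lies in $\{-1\}\times\mathbb{N}$). For the $\partial_y$-piece one writes $b_n=(\omega_2/\omega_1)a_n+[b_{r_2}-(\omega_2/\omega_1)a_{r_2}]\binom{r_2+1}{n+1}(-\xi_0)^{r_2-n}$ and re-indexes the second sum by $n\mapsto n+1$; the two summands collapse to $(\omega_2/\omega_1)a_{r_2}\,y(y-\xi_0 x^{\omega_2/\omega_1})^{r_2}$ and $(b_{r_2}-(\omega_2/\omega_1)a_{r_2})(y-\xi_0 x^{\omega_2/\omega_1})^{r_2+1}$, giving \eqref{prop-eq-before-blow-up-expressions-1}. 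In the $\{-1\}\times\mathbb{N}$-case the relation $b_{r_2}=0$ together with one further application of Pascal's rule fuses the two contributions into the single term $(\omega_2/\omega_1)\xi_0 a_{r_2}\,x^{\omega_2/\omega_1-1}(y-\xi_0 x^{\omega_2/\omega_1})^{r_2}$ of \eqref{prop-eq-before-blow-up-expressions-2}.

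The delicate point is the combinatorial collapse in the $\partial_y$-computation, particularly in the $(-1,r)$ case where what would otherwise appear as a sum of two distinct monomials in $(y-\xi_0 x^{\omega_2/\omega_1})$ has to merge into one via Pascal's identity. The integrality step that produces $\omega_2/\omega_1\in\mathbb{N}$ is by comparison immediate, since every binomial coefficient in the expansion of $C_0$ is non-zero and therefore no monomial of $\delta$ in the first graduation may be missing, leaving no flexibility for the exponents $k_l$ to fail to be integer.
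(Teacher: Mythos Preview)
Your proof follows essentially the same path as the paper's: both extract the coefficients $c_l,a_n,b_n$ from the factorizations of $C_0,\alpha_0,\beta_{-1}$ forced at the end of Proposition~\ref{prop-after-blow-up}, then reassemble the first graduation via the binomial theorem, with Pascal's identity driving the $\partial_y$-collapse in the $\{-1\}\times\mathbb N$ case exactly as in the paper.

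The only point of divergence is your argument for $\omega_2/\omega_1\in\mathbb N$, and here there is a small gap. You read integrality off the exponent $k_{r_1-1}=\omega_2/\omega_1$ in the $\delta$-factor, which tacitly assumes $r_1\ge 1$. When $r_1=0$ (the origin is a degenerate equilibrium of the adjoint field but not an impasse point, so the $\delta$-part of the first graduation is a nonzero constant and there is no index $l=r_1-1$), your argument does not apply and one must fall back on the $a_n$ or $b_n$ coefficients instead. The paper avoids this case split by arguing after reassembly: it observes that the support point $(\omega_2/\omega_1,\,r_1+r_2-1)$ of the full auxiliary vector field must have integer abscissa, and checks separately (splitting on $a_{r_2}\ne 0$ versus $a_{r_2}=0,\ b_{r_2}\ne 0$) that this point genuinely appears. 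Your approach is slightly cleaner when it applies, but you should add a line covering $r_1=0$.
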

\begin{proof}
Now we characterize the expression of the auxiliary vector field (and hence the constrained system) before the blow up.

\textbf{Case 1:} Before the blow-up, the main vertex of the Newton polygon is of the form $(0,d_{1}  + d_{2})$. We recall that $\xi_{0}\neq0$ because at the origin the height decreases. Remember also that $\omega_{1}r_{1} = d_{1}$ and $\omega_{2}r_{2} = d_{2}$. Moreover, $a_{-1,r_{2}} = 0$. Since at $(0,\xi_{0})$ the height does not decrease, it follows from what we have shown that
$$
\scriptsize
\left\{
  \begin{array}{rccccccl}
    \displaystyle\sum_{l = 0}^{r_{1}}c_{l}\xi^{l} & = & C_{0}(\xi) & = & c_{r_{1}}(\xi - \xi_{0})^{r_{1}} & = & c_{r_{1}}\displaystyle\sum_{j = 0}^{r_{1}}\xi^{j}(-\xi_{0})^{r_{1} - j}\displaystyle\binom{r_{1}}{j}; \\
    \displaystyle\sum_{n = 0}^{r_{2}}\frac{a_{n}}{\omega_{1}}\xi^{n} & = & \alpha_{0}(\xi) & = & \frac{a_{r_{2}}}{\omega_{1}}(\xi - \xi_{0})^{r_{2}} & = & \frac{a_{r_{2}}}{\omega_{1}}\displaystyle\sum_{j = 0}^{r_{2}}\xi^{j}(-\xi_{0})^{r_{2} - j}\displaystyle\binom{r_{2}}{j}; \\
    \displaystyle\sum_{n = -1}^{r_{2}}\lambda_{n}\xi^{n+1} & = & \beta_{-1}(\xi) & = & \lambda_{r_{2}}(\xi - \xi_{0})^{r_{2}+1} & = & \lambda_{r_{2}}\displaystyle\sum_{j = 0}^{r_{2}+1}\xi^{j}(-\xi_{0})^{r_{2}+1 - j}\displaystyle\binom{r_{2}+1}{j}; \\
  \end{array}
\right.
$$
where $\lambda_{n} = \big{(}b_{n} - \frac{\omega_{2}}{\omega_{1}}a_{n}\big{)}$. Thus we have the following equalities, for each $l$ and $n$:
$$
\scriptsize
\left\{
    \begin{array}{rcl}
      c_{l} & = & c_{r_{1}}(-\xi_{0})^{r_{1} - l}\displaystyle\binom{r_{1}}{l}; \\
      a_{n} & = & a_{r_{2}}(-\xi_{0})^{r_{2} - n}\displaystyle\binom{r_{2}}{n};\\
      b_{n} & = & \big{(}b_{r_{2}} - \frac{\omega_{2}}{\omega_{1}}a_{r_{2}}\big{)}(-\xi_{0})^{r_{2} - n}\displaystyle\binom{r_{2} + 1}{n + 1} + \frac{\omega_{2}}{\omega_{1}}a_{r_{2}}(-\xi_{0})^{r_{2} - n}\displaystyle\binom{r_{2}}{n}.
    \end{array}
\right.
$$

We remark that for each $k$ and each $m$ we have $k = \frac{\omega_{2}}{\omega_{1}}(r_{1} - l)$ and $m = \frac{\omega_{2}}{\omega_{1}}(r_{2} - n)$. Then, before the blow up at the $x$ direction, the first graduation the auxiliary vector field is of the form
$$
\begin{aligned}
\scriptstyle \Bigg{(} & \sum_{l = 0}^{r_{1}} \scriptstyle c_{l}x^{k}y^{l}\Bigg{)}\Bigg{(}\sum_{n = 0}^{r_{2}}a_{n}x^{m}y^{n}\Bigg{)}x\frac{\partial}{\partial x} = \\
& \scriptstyle = \Bigg{(}\sum_{l = 0}^{r_{1}}c_{r_{1}}(-\xi_{0})^{r_{1} - l}\binom{r_{1}}{l}(x^{\frac{\omega_{2}}{\omega_{1}}})^{r_{1} - l}y^{l}\Bigg{)}\Bigg{(}\sum_{n = 0}^{r_{2}}a_{r_{2}}(-\xi_{0})^{r_{2} - n}\binom{r_{2}}{n}(x^{\frac{\omega_{2}}{\omega_{1}}})^{r_{2} - n}y^{n}\Bigg{)}x\frac{\partial}{\partial x} = \\
& \scriptstyle = \Bigg{(}\sum_{l = 0}^{r_{1}}c_{r_{1}}\binom{r_{1}}{l}(-\xi_{0}x^{\frac{\omega_{2}}{\omega_{1}}})^{r_{1} - l}y^{l}\Bigg{)}\Bigg{(}\sum_{n = 0}^{r_{2}}a_{r_{2}}\binom{r_{2}}{n}(-\xi_{0}x^{\frac{\omega_{2}}{\omega_{1}}})^{r_{2} - n}y^{n}\Bigg{)}x\frac{\partial}{\partial x} = \\
& \scriptstyle = \Bigg{(}c_{r_{1}}\big{(}y-\xi_{0}x^{\frac{\omega_{2}}{\omega_{1}}}\big{)}^{r_{1}}\Bigg{)}\Bigg{(}a_{r_{2}}\big{(}y-\xi_{0}x^{\frac{\omega_{2}}{\omega_{1}}}\big{)}^{r_{2}}\Bigg{)}x\frac{\partial}{\partial x}
\end{aligned}
$$
in the $x$ component, and in the $y$ component we have (dropping the expression of the impasse set in order to simplify the notation)

$$
\begin{aligned}
\scriptstyle
&\Bigg{(} \scriptstyle \sum_{n = -1}^{r_{2}}\Big{(}\big{(}b_{r_{2}} - \frac{\omega_{2}}{\omega_{1}}a_{r_{2}}\big{)}(-\xi_{0})^{r_{2} - n}\binom{r_{2} + 1}{n + 1} + \frac{\omega_{2}}{\omega_{1}}a_{r_{2}}(-\xi_{0})^{r_{2} - n}\binom{r_{2}}{n}\Big{)}x^{\frac{\omega_{2}}{\omega_{1}}(r_{2} - n)}y^{n}\Bigg{)}y\frac{\partial}{\partial y} = \\
& \scriptstyle = \Bigg{(}\sum_{n = -1}^{r_{2}}\big{(}b_{r_{2}} - \frac{\omega_{2}}{\omega_{1}}a_{r_{2}}\big{)}y^{n}(-\xi_{0}x^{\frac{\omega_{2}}{\omega_{1}}})^{r_{2} - n}\binom{r_{2} + 1}{n + 1} + \sum_{n = 0}^{r_{2}}\frac{\omega_{2}}{\omega_{1}}a_{r_{2}}y^{n}(-\xi_{0}x^{\frac{\omega_{2}}{\omega_{1}}})^{r_{2} - n}\binom{r_{2}}{n}\Bigg{)}y\frac{\partial}{\partial y} = \\
& \scriptstyle = \Bigg{(}\sum_{N = 0}^{r_{2}}\big{(}b_{r_{2}} - \frac{\omega_{2}}{\omega_{1}}a_{r_{2}}\big{)}y^{N-1}(-\xi_{0}x^{\frac{\omega_{2}}{\omega_{1}}})^{r_{2}+1 - N}\binom{r_{2} + 1}{N} + \frac{\omega_{2}}{\omega_{1}}a_{r_{2}}(y-\xi_{0}x^{\frac{\omega_{2}}{\omega_{1}}})^{r_{2}}\Bigg{)}y\frac{\partial}{\partial y} = \\
& = \scriptstyle \Bigg{(}y^{-1}\big{(}b_{r_{2}} - \frac{\omega_{2}}{\omega_{1}}a_{r_{2}}\big{)}(y-\xi_{0}x^{\frac{\omega_{2}}{\omega_{1}}})^{r_{2}+1} + \frac{\omega_{2}}{\omega_{1}}a_{r_{2}}(y-\xi_{0}x^{\frac{\omega_{2}}{\omega_{1}}})^{r_{2}}\Bigg{)}y\frac{\partial}{\partial y}.
\end{aligned}
$$

To end the Case 1, let us remark that $c_{r_{1}}\neq 0$. Then
\begin{itemize}
  \item If $a_{r_{2}}\neq0$, then the $x$ component gives the contributions $(0,r_{1} + r_{2})$ and $(\frac{\omega_{2}}{\omega_{1}}, r_{1} + r_{2} - 1)$ to the support $\mathcal{Q}$.
  \item If $a_{r_{2}} = 0$, then $b_{r_{2}}\neq0$ and the $y$ component gives contributions $(0,r_{1} + r_{2})$ and $(\frac{\omega_{2}}{\omega_{1}}, r_{1} + r_{2} - 1)$ to the support $\mathcal{Q}$.
\end{itemize}

In both cases, we have $\frac{\omega_{2}}{\omega_{1}}\in\mathbb{N}$.

\textbf{Case 2:} Suppose that the main vertex is of the form $(-1, d_{1} + d_{2})$. Remember that $\omega_{1}r_{1} = d_{1}$ and $\omega_{2}r_{2} = d_{2} + \omega_{1}$. We also remark that $a_{r_{2}} = a_{-1,r_{2}}\neq 0$ and $b_{r_{2}} = b_{-1,r_{2}} = 0$. This case is almost analogous to Case 1, with one more term in the expansion of the polynomials $\alpha_{0}(\xi)$ and $\beta_{-1}(\xi)$. The expression of $C_{0}$ remains exactly the same, so we focus in the other expressions. For each $k$ and each $m$ we have $k = \frac{\omega_{2}}{\omega_{1}}(r_{1} - l)$ and $m = \frac{\omega_{2}}{\omega_{1}}(r_{2} - n) - 1$. So in the $x$ component we obtain
$$
\begin{aligned}
\scriptstyle
\Bigg{(}\sum_{l = 0}^{r_{1}}c_{l}x^{k}y^{l}\Bigg{)}& \scriptstyle \Bigg{(}\sum_{n = 0}^{r_{2}}a_{n}x^{m}y^{n}\Bigg{)}x\frac{\partial}{\partial x} = \\
& \scriptstyle = \Bigg{(}c_{r_{1}}\big{(}y-\xi_{0}x^{\frac{\omega_{2}}{\omega_{1}}}\big{)}^{r_{1}}\Bigg{)}\Bigg{(}\sum_{n = 0}^{r_{2}}a_{r_{2}}(-\xi_{0})^{r_{2} - n}\binom{r_{2}}{n}(x^{\frac{\omega_{2}}{\omega_{1}}})^{r_{2} - n}x^{-1}y^{n}\Bigg{)}x\frac{\partial}{\partial x} = \\
& \scriptstyle = \Bigg{(}c_{r_{1}}\big{(}y-\xi_{0}x^{\frac{\omega_{2}}{\omega_{1}}}\big{)}^{r_{1}}\Bigg{)}\Bigg{(}\sum_{n = 0}^{r_{2}}a_{r_{2}}\binom{r_{2}}{n}(-\xi_{0}x^{\frac{\omega_{2}}{\omega_{1}}})^{r_{2} - n}y^{n}\Bigg{)}\frac{\partial}{\partial x} = \\
& \scriptstyle = \Bigg{(}c_{r_{1}}\big{(}y-\xi_{0}x^{\frac{\omega_{2}}{\omega_{1}}}\big{)}^{r_{1}}\Bigg{)}\Bigg{(}a_{r_{2}}\big{(}y-\xi_{0}x^{\frac{\omega_{2}}{\omega_{1}}}\big{)}^{r_{2}}\Bigg{)}\frac{\partial}{\partial x}.
\end{aligned}
$$

Before we compute the expression in the $y$ component, we must give the expression of the coefficients $b_{n}$. Since $b_{r_{2}} = 0$, then for each $n$ we have
$$
\begin{aligned}
\scriptstyle b_{n} & \scriptstyle = - \frac{\omega_{2}}{\omega_{1}}a_{r_{2}}(-\xi_{0})^{r_{2} - n}\binom{r_{2} + 1}{n + 1} + \frac{\omega_{2}}{\omega_{1}}a_{r_{2}}(-\xi_{0})^{r_{2} - n}\binom{r_{2}}{n} \scriptstyle = - \frac{\omega_{2}}{\omega_{1}}a_{r_{2}}(-\xi_{0})^{r_{2} - n}\Bigg{(}\binom{r_{2} + 1}{n + 1} - \binom{r_{2}}{n}\Bigg{)} = \\
& \scriptstyle = - \frac{\omega_{2}}{\omega_{1}}a_{r_{2}}(-\xi_{0})^{r_{2} - n}\binom{r_{2}}{n}\frac{r_{2}-n}{n+1} \scriptstyle = - \frac{\omega_{2}}{\omega_{1}}a_{r_{2}}(-\xi_{0})^{r_{2} - n}\binom{r_{2}}{n+1},
\end{aligned}
$$
and finally we obtain
$$
\begin{aligned}
\scriptstyle \Bigg{(}& \sum_{l = 0}^{r_{1}} \scriptstyle c_{l}x^{k}y^{l}\Bigg{)} \scriptstyle \Bigg{(}\sum_{n = -1}^{r_{2}}b_{n}x^{m}y^{n}\Bigg{)}y\frac{\partial}{\partial y} = \\
& \scriptstyle = \Bigg{(}c_{r_{1}}\big{(}y-\xi_{0}x^{\frac{\omega_{2}}{\omega_{1}}}\big{)}^{r_{1}}\Bigg{)} \Bigg{(}\sum_{n = -1}^{r_{2}} \Big{(} - \frac{\omega_{2}}{\omega_{1}}a_{r_{2}}(-\xi_{0})^{r_{2} - n}\binom{r_{2}}{n+1}\Big{)} x^{\frac{\omega_{2}}{\omega_{1}}(r_{2} - n)}x^{-1}y^{n}\Bigg{)}y\frac{\partial}{\partial y} = \\
& \scriptstyle = \Bigg{(}c_{r_{1}}\big{(}y-\xi_{0}x^{\frac{\omega_{2}}{\omega_{1}}}\big{)}^{r_{1}}\Bigg{)} \Bigg{(}-\frac{\omega_{2}}{\omega_{1}}a_{r_{2}}x^{-1}\sum_{n = -1}^{r_{2}}(-\xi_{0})^{r_{2} - n}\binom{r_{2}}{n+1}(x^{\frac{\omega_{2}}{\omega_{1}}})^{(r_{2} - n)}y^{n}\Bigg{)}y\frac{\partial}{\partial y} = \\
\end{aligned}
$$
$$
\begin{aligned}
& \scriptstyle = \Bigg{(}c_{r_{1}}\big{(}y-\xi_{0}x^{\frac{\omega_{2}}{\omega_{1}}}\big{)}^{r_{1}}\Bigg{)} \Bigg{(}-\frac{\omega_{2}}{\omega_{1}}a_{r_{2}}x^{-1}\sum_{N = 0}^{r_{2}}\binom{r_{2}}{N}(-\xi_{0}x^{\frac{\omega_{2}}{\omega_{1}}})^{(r_{2} - N + 1)}y^{N-1}\Bigg{)}y\frac{\partial}{\partial y} = \\
& \scriptstyle = \Bigg{(}c_{r_{1}}\big{(}y-\xi_{0}x^{\frac{\omega_{2}}{\omega_{1}}}\big{)}^{r_{1}}\Bigg{)} \Bigg{(}-\frac{\omega_{2}}{\omega_{1}}a_{r_{2}}(-\xi_{0}x^{\frac{\omega_{2}}{\omega_{1}}-1})\sum_{N = 0}^{r_{2}}\binom{r_{2}}{N}(-\xi_{0}x^{\frac{\omega_{2}}{\omega_{1}}})^{(r_{2} - N)}y^{N}\Bigg{)}\frac{\partial}{\partial y} = \\
& \scriptstyle = \Bigg{(}c_{r_{1}}\big{(}y-\xi_{0}x^{\frac{\omega_{2}}{\omega_{1}}}\big{)}^{r_{1}}\Bigg{)} \Bigg{(}-\frac{\omega_{2}}{\omega_{1}}a_{r_{2}}(-\xi_{0}x^{\frac{\omega_{2}}{\omega_{1}}-1})(y - \xi_{0}x^{\frac{\omega_{2}}{\omega_{1}}})^{r_{2}}\Bigg{)}\frac{\partial}{\partial y}.
\end{aligned}
$$

From the expression of the $x$ component, the Newton polygon contains the point $(-1, r_{1} + r_{2})$. Moreover, the $y$ component gives the contribution $(\frac{\omega_{2}}{\omega_{1}} -1, r_{1} + r_{2} - 1)$. This implies $\frac{\omega_{2}}{\omega_{1}}\in\mathbb{N}$.
\end{proof}

\begin{proposition}\label{prop-change-coord-infty}
Suppose that the main segment $\gamma_{1}$ is not horizontal and after a blow-up at the $x$ direction there is a point in the excepcional divisor such that the height did not decrease. Then there is an analytic change of coordinates such that one of the following holds:
\begin{itemize}
  \item We obtain a new weight vector $\omega = (\omega_{1},\omega_{2})$ such that, after a blow up in the $x$ direction, the main vertex is contained in $\{0\}\times\mathbb{N}$, the height decreases and there is only a finite number of points in the exceptional divisor $\mathbb{E}_{x}$ such that they may not be elementary.
  \item The main segment is horizontal.
\end{itemize}
\end{proposition}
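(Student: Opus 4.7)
The plan is to invoke Proposition \ref{prop-before-blow-up-expressions} to pin down the structure of the auxiliary vector field and then iterate the admissible change of coordinates from Lemma \ref{lemma-change-2}. By hypothesis, after the blow-up with weights $(\omega_1,\omega_2)$ in the $x$ direction there is a point $(0,\xi_0)\in\mathbb{E}_x$ at which the height of the Newton polygon fails to decrease; necessarily $\xi_0\neq 0$, since by Lemma \ref{lemma-blow-up-x-direction} the height at the origin of $\mathbb{E}_x$ strictly drops. Proposition \ref{prop-before-blow-up-expressions} then forces $\omega_2/\omega_1\in\mathbb{N}$ and exhibits the first graduation of $X_A$ as supported on the analytic curve $y=\xi_0\, x^{\omega_2/\omega_1}$.

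First I would apply the admissible change of coordinates $\Psi(\bar{x},\bar{y})=(\bar{x},\,\xi_0\bar{x}^{\omega_2/\omega_1}+\bar{y})$ coming from Lemma \ref{lemma-change-2} (with $\alpha=-\xi_0$ and $\beta=\omega_2/\omega_1\in\mathbb{N}$), which flattens the offending curve $\{y=\xi_0 x^{\omega_2/\omega_1}\}$ to $\{\bar{y}=0\}$. By Lemma \ref{lemma-change-2} the polygon stays controllable, its height does not increase, and the slope of its main segment strictly increases. I would then recompute the polygon in the new coordinates and distinguish three cases: (a) the new main segment is horizontal, yielding the second alternative of the proposition; (b) the new main segment is non-horizontal and the blow-up in the $x$ direction with the weight vector normal to it strictly decreases the height at every exceptional point, so by Proposition \ref{prop-after-blow-up} only finitely many non-elementary points remain, yielding the first alternative; or (c) Proposition \ref{prop-before-blow-up-expressions} applies again at some new point of the divisor, in which case I repeat the procedure.

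If (a) or (b) occurs after finitely many iterations the proposition is proved. The main obstacle is the case in which (c) recurs indefinitely. The successive admissible changes then assemble into a formal substitution
\[
\bar{y}\;=\;y\;-\;\sum_{k\geq 1}\alpha_k\,x^{\beta_k},
\]
with $(\beta_k)\subset\mathbb{N}$ strictly increasing (since each step strictly raises the slope of the main segment). Inspection of the exact form given by Proposition \ref{prop-before-blow-up-expressions} shows that, at every order, the formal curve $y=\sum_k\alpha_k x^{\beta_k}$ is a common formal invariant branch of the factors of $X_A$: either a formal separatrix of the adjoint vector field, or a formal irreducible branch of the impasse set $\{\delta=0\}$, or both simultaneously.

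The final and most delicate step is the promotion of this formal branch to a convergent one. For the impasse component this is automatic, since $\delta$ is real analytic and every formal branch of an analytic curve is analytic (Puiseux). For the vector-field component, convergence of a formal separatrix of a real analytic planar vector field with isolated singularity is a classical fact of Briot--Bouquet / Camacho--Sad type. Therefore $\varphi(x):=\sum_{k\geq 1}\alpha_k\, x^{\beta_k}$ is a genuine real analytic function in a neighborhood of $0$, and the analytic change $\bar{y}=y-\varphi(x)$ simultaneously factors $\bar{y}$ out of every level of the $(\omega_1,\omega_2)$-graduation of $X_A$; in the new analytic coordinates the main segment is horizontal and the second alternative of the proposition holds. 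This formal-to-analytic promotion, where the real analyticity of the data is essential, is the hardest point of the argument.
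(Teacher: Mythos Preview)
Your strategy is essentially the paper's: invoke Proposition~\ref{prop-before-blow-up-expressions} to read off $\omega_2/\omega_1\in\mathbb{N}$ and the special shape of the first graduation, apply the admissible change of Lemma~\ref{lemma-change-2} to straighten the curve $y=\xi_0 x^{\omega_2/\omega_1}$, iterate, and control the possible infinite iteration by identifying the limiting formal curve with an analytic object.

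The one substantive difference is in how the infinite iteration is disposed of. The paper splits into two subcases according to whether the recalcitrant point $p$ is an equilibrium of the adjoint vector field. When it is, the paper argues the iteration \emph{must terminate}: otherwise the adjoint field would be $\infty$-flat along the impasse curve, contradicting analyticity (invoking Lemmas~7 and~9 of Pelletier). When it is not (so $r_2=0$ and the main vertex sits in $\{-1\}\times\mathbb{N}$), the limiting formal curve is forced to be a branch of the analytic set $\Delta$, hence convergent. You instead argue uniformly that the limiting formal curve is either a formal branch of $\Delta$ or a formal separatrix of the adjoint field, and appeal to Puiseux in the first case and to the classical convergence of formal separatrices of planar analytic vector fields in the second. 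Your route is valid, but it leans on a somewhat heavier external fact (convergence of formal separatrices), whereas the paper's case split yields a more self-contained argument: in the equilibrium case no convergence question ever arises, and in the non-equilibrium case only the analyticity of $\Delta$ is used. One small caution: your sentence ``a common formal invariant branch of the factors of $X_A$'' deserves a line of justification---you should say explicitly why, at every step of the iteration, \emph{both} the $\delta$-factor and the adjoint part must vanish to the required order at the new $\xi_0$ (this is exactly the content of the system $C_i(\xi_0)=\alpha_j(\xi_0)=\beta_j(\xi_0)=0$ in Proposition~\ref{prop-after-blow-up}).
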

\begin{proof}
We know that there is only one non-elementary point $p$ in the exceptional divisor and the expression of the auxiliary vector field before the blow up is given by Proposition \ref{prop-before-blow-up-expressions}. We have two cases to consider:

\textbf{Case 1:} The point $p$ is an equilibrium point of the adjoint vector field. In this case, take a change of coordinates of the form
$$\scriptsize\Psi_{1}(x,y) = (x, y-\xi_{0}x^{\frac{\omega_{2}}{\omega_{1}}}).$$

We recall that by Proposition \ref{prop-before-blow-up-expressions} we have $\frac{\omega_{2}}{\omega_{1}}\in\mathbb{N}$. By Lemma \ref{lemma-change-2}, such change of coordinates does not change the height of the polygon and increases the slope of the main segment. Applying such change of coordinates in the expressions obtained from Proposition \ref{prop-before-blow-up-expressions}, the first graduation of the auxiliary vector field becomes
$$\scriptstyle X_{A} = c_{r_{1}}y^{r_{1}}\Bigg{(}a_{r_{2}}y^{r_{2}}x\frac{\partial}{\partial x} + b_{r_{2}}y^{r_{2}}y\frac{\partial}{\partial y}\Bigg{)} + ...
$$
if the main vertex is contained in $\{0\}\times\mathbb{N}$ or
$$
\scriptstyle X_{A} = c_{r_{1}}y^{r_{1}}\Bigg{(}a_{r_{2}}y^{r_{2}}\frac{\partial}{\partial x} + \frac{\omega_{2}}{\omega_{1}}a_{r_{2}}(\xi_{0}x^{\frac{\omega_{2}}{\omega_{1}}-1})y^{r_{2}}\frac{\partial}{\partial y}\Bigg{)} + ...
$$
if the main vertex is contained in $\{-1\}\times\mathbb{N}$. In both cases, the main segment of the new Newton polygon is not horizontal, and then we obtain new weights to continue the resolution of singularities, since the equilibrium point of the adjoint vector field is isolated.

Now we must blow up the origin with these new weights. After this new blow-up, once again we have two possibilities: the new blow up decreases the height of the Newton polygon or it does not decrease the height. In this last case, we apply again a change of coordinates $\Psi_{2}$ given by Lemma \ref{lemma-change-2}. Observe that this operation does not change the height and it increases the slope of the main segment. And then we continue the reasoning.

If it is necessary, we apply $n$ change of coordinates $\Psi_{n}$ given by Lemma \ref{lemma-change-2}. The main segment will not be horizontal and we obtain new weights to continue the resolution of singularities. These new weights decreases the height of the Newton polygon and then the Proposition is true.

In this case where $p$ is an equilibrium point of the adjoint vector field, we affirm that this iteration of admissible change of coordinates ends. Indeed, if this was not the case, since at each step the slope of the main segment increases, after an infinite composition of admissible coordinates
$$\scriptsize \Psi_{\infty}(x,y) = \Big{(}x, y-\sum_{i} \xi_{i}x^{\tilde{\omega}_{i}}\Big{)}$$
we should have a Newton polygon with horizontal main segment. Moreover, after the change of coordinates $\Psi_{\infty}$ we would have that the adjoint vector field is $\infty$-flat along the impasse set $\Delta$, which is a contradiction (see Lemmas 7 and 9 of \cite{Pelletier}). Then the iteration of admissible change of coordinates does not continue indefinitely.

\textbf{Case 2:} The point $p$ is not an equilibrium point of the adjoint vector field. This implies that $r_{2} = 0$ and the main vertex is not contained in $\{0\}\times\mathbb{N}$. Applying an admissible change of coordinates $\Psi_{1}$ in the expressions obtained from Proposition \ref{prop-before-blow-up-expressions}, the first graduation of the auxiliary vector field becomes
$$
\scriptstyle X_{A} = c_{r_{1}}y^{r_{1}}\Bigg{(}a_{-1,0}\frac{\partial}{\partial x} + \frac{\omega_{2}}{\omega_{1}}a_{-1,0}(\xi_{0}x^{\frac{\omega_{2}}{\omega_{1}}-1})\frac{\partial}{\partial y}\Bigg{)} + ...
$$

We have now two possibilities:
\begin{itemize}
  \item The slope of the main segment increases and we obtain new weights to continue the resolution of singularities;
  \item The main segment is horizontal.
\end{itemize}

In the first case we must blow up the origin with these new weights. After this new blow-up, once again we have the same two possibilities as before. If it is necessary, we apply $n$ change of coordinates $\Psi_{n}$ given by Lemma \ref{lemma-change-2}. Then or the main segment will be horizontal or we obtain new weights to continue the resolution of singularities. This leads us to a case where the process may never ends and we must apply a change of coordinates of the form
$$\Psi_{\infty}(x,y) = \Big{(}x, y-\sum_{i} \xi_{i}x^{\tilde{\omega}_{i}}\Big{)}.$$

After a change of coordinates $\Psi_{\infty}$, the main segment is horizontal, since at each step the slope increases. Now we must show that such change of coordinates is analytic. Indeed, we remark that the impasse curve $\Delta$ always contains this non elementary point $p$. From the way we defined $\Psi_{\infty}$, it follows that the curve $\{y-\sum_{i} \xi_{i}x^{\tilde{\omega}_{i}} = 0\}$ is contained in the curve $\Delta$. In other words, the curve $\{y-\sum_{i} \xi_{i}x^{\tilde{\omega}_{i}} = 0\}$ must be a component of the analytic set $\Delta$. Since the impasse set is an analytic curve, so is the curve $\{y-\sum_{i} \xi_{i}x^{\tilde{\omega}_{i}} = 0\}$ and therefore the formal sum converges.
\end{proof}

\textbf{Example:} Consider the vector field
\begin{equation}\label{ex-resolution-height-case-1}
 \scriptstyle (y - x^{s})\dot{x} = (y - x^{s})^{r}x; \ (y - x^{s})\dot{y} = -s(y - x^{s})^{r+1} + sy(y - x^{s})^{r} + y^{k+1},
\end{equation}
whose auxiliary vector field is
\begin{equation}\label{ex-resolution-height-case-1-auxiliary}
\scriptstyle X_{A} = (y - x^{s})(y - x^{s})^{r}x\frac{\partial}{\partial x}  + (y - x^{s})\Big{(}-sy^{-1}(y - x^{s})^{r+1} + s(y - x^{s})^{r} + y^{k}\Big{)}y\frac{\partial}{\partial y};
\end{equation}
where $k\geq r+1$. Observe that the origin is an isolated equilibrium point of the adjoint vector field. The main vertex is of the form $(0,r)$ and therefore the height is $r$. Blowing up the auxiliary vector field in origin with weight $\omega = (1,s)$ in the $x$ direction, we obtain
$$
\scriptstyle \widetilde{X}_{A} = (\tilde{y} - 1)(\tilde{y} - 1)^{r}x\frac{\partial}{\partial x} + (\tilde{y} - 1)\Big{(}-s\tilde{y}^{-1}(\tilde{y} - 1)^{r+1} + \tilde{x}^{(k-r)s}\tilde{y}^{k} \Big{)}\tilde{y}\frac{\partial}{\partial y}.
$$

Observe that at the origin the height has decreased, but when we translate origin to $(0,1)\in\mathbb{E}_{x}$ we see that the height is still $r$. So, before the blow-up we must apply a change of coordinates of the form
$$\bar{x} = x; \ \bar{y} = y - x^{s},$$
and then \eqref{ex-resolution-height-case-1-auxiliary} takes form
\begin{equation}\label{ex-resolution-height-case-1-auxiliary-change}
\scriptstyle \overline{X}_{A} = \bar{y}\bar{y}^{r}\bar{x}\frac{\partial}{\partial \bar{x}}  + \bar{y}(\bar{y} + \bar{x}^{s})^{k}\frac{\partial}{\partial \bar{y}},
\end{equation}
and now we have a new weight vector to continue the process.

\section{Global resolution of singularities}\label{sec-global-resolution}
\noindent

We have already studied the local strategy of resolution of singularities and now we are able to give the proof of Theorem \ref{teo-resolution-singularities}. Suppose that $(\mathcal{M}, \mathcal{X},\mathcal{I})$ is a real analytic constrained differential system defined on a 2-dimensional compact manifold $\mathcal{M}$ with corners, which means that $\mathcal{X}$ is a 1-dimensional analytic foliation and $\Delta$ is an 1-dimensional analytic subset of $\mathcal{M}$.

Recall that, in our firsts assumptions, in the local representations of $\mathcal{X}$, the components $P$ and $Q$ of $X$ do not have common factor, which implies that all equilibrium points of the adjoint vector field are isolated. Analogously, the impasse ideal $\mathcal{I}$ is locally generated by an irreducible analytic function $\delta$, and then $\delta$ has isolated singular points. Since all of our objects are analytic, there is a finite number of tangency points between the foliation and the impasse set. Due to the compactness of $\mathcal{M}$, there is finite set $\{p_{i}\}_{i = 1}^{k} $ of non elementary points.

In each point $p_{i}$ we fix local coordinates $(\phi_{i}, U_{i})$ such that the Newton polygon of the auxiliary vector field is controllable. Applying Theorem \ref{teo-local-strategy} in each system of coordinates, we have $(\tilde{\phi}_{i}, \widetilde{U}_{i})$ such that in each $\widetilde{U}_{i}$ there are only elementary points. Since the blow ups are analytic diffeomorphisms that preserve orientation outside the exceptional divisor (as well as the admissible change of coordinates), then the constrained system remains well defined in the intersection between two charts.

Then we obtain a finite sequence of weighted blow-ups
$$(\widetilde{\mathcal{M}}, \widetilde{\mathcal{X}}, \widetilde{\mathcal{I}}) = (\mathcal{M}_{n}, \mathcal{X}_{n}, \mathcal{I}_{n}) \xrightarrow{\Phi_{n}} \cdots \xrightarrow{\Phi_{0}} (\mathcal{M}_{0}, \mathcal{X}_{0}, \mathcal{I}_{0}) = (\mathcal{M}, \mathcal{X}, \mathcal{I})$$
such that the strict transformed $(\widetilde{\mathcal{M}}, \widetilde{\mathcal{X}}, \widetilde{\mathcal{I}})$ has only elementary points.

\section{Acknowledgments}

The first author is supported by Sao Paulo Research Foundation (FAPESP) (grants 2016/22310-0 and 2018/24692-2), and by Coordena\c c\~ao de Aperfei\c coamento de Pessoal de N\'ivel Superior - Brasil (CAPES) - Finance Code 001. The second author was financed by CAPES-Print and FAPESP. The authors are grateful for Daniel Cantergiani Panazzolo for many discussions and suggestions, and for LMIA-Universit\'e de Haute-Alsace for the hospitality during the preparation of this work.




\begin{thebibliography}{30}

\bibitem{AlvarezFerragutJarque} M. J. \'Alvarez, A. Ferragut and X. Jarque. \emph{A survey on the blow up technique}. Int. J. Bifurcation and Chaos 21 (2011), 3103-3118.

\bibitem{BelottoPanazzolo} A. Belotto, D. Panazzolo. \emph{Global desingularization of a real-analytic planar vector field}. Universit\'e de Haute-Alsace, 2011, unpublished.

\bibitem{Bendixson} I. Bendixson. \emph{Sur les courbes d\'efinies par des \'equations diff\'erentielles}. Acta Math. 24 (1901), 1-88.

\bibitem{BrunellaMiari} M. Brunella, M. Miari. \emph{Topological equivalence of a plane vector field with its principal part defined through Newton Polyhedra}. J. Differ. Equations 85 (1990), 338-366.

\bibitem{CardinSilvaTeixeira} P. T. Cardin, P. R. da Silva, M. A. Teixeira. \emph{Implicit differential equations with impasse singularities and singular perturbation problems}.
Isr. J. Math. 189 (2012), 307-322.

\bibitem{ChuaOka1} L. O. Chua, H. Oka. \emph{Normal forms for constrained nonlinear differential equations. I. Theory}. IEEE Transactions on Circuits and Systems, vol. 35, no. 7 (1988), 881-901.

\bibitem{Dumortier} F. Dumortier. \emph{Singularities of vector fields on the plane}. J. Differ. Equations 23 (1977), 53-106.

\bibitem{DumortierLlibreArtes} F. Dumortier, J. Llibre, J. C. Art\'es. \emph{Qualitative Theory of Planar Differential Systems}. Universitext, Springer-Verlag Berlin Heidelberg (2006).

\bibitem{Joyce} D. Joyce. \emph{A generalization of manifold with corners}. Adv. Math. 299 (2016), 760-862.

\bibitem{Panazzolo} D. Panazzolo. \emph{Resolution of singularities of real-analytic vector fields in dimension three}. Acta Math. 197 (2006), no. 2, 167-289.

\bibitem{Pelletier} M. Pelletier. \emph{\'Eclatements quasi homog\`enes}. Ann. Fac. Sci. Toulouse Math., vol 4 (1995), 879-937.

\bibitem{PerezSilva} O. H. Perez, P. R. da Silva. \emph{On the classification of phase portraits of constrained differential systems under topological equivalence}. In preparation.

\bibitem{RabierRheinboldt} P. J. Rabier, W. C. Rheinboldt. \emph{Theoretical and numerical analysis of differential-algebraic equations}, in P. G. Ciarlet et al. (eds.), \emph{Handbook of Numerical Analysis}, North Holland/Elsevier, Vol. VIII (2002), 183-540.

\bibitem{Schneider} R. Schneider. \emph{Convex Bodies: The Brunn-Minkowski Theory}. Encyclopedia of mathematics and its applications. 44. Cambridge University Press (1993).

\bibitem{Seidenberg} A. Seidenberg. \emph{Reduction of singularities of the differential equation $A dy = B dx$}. Amer. J. Math. 90 (1968), 248-269.

\bibitem{Smale} S. Smale. \emph{On the mathematical foundations of electrical circuit theory}. J. Differ. Geom. 7 (1972), 193-210.

\bibitem{SotoZhito} J. Sotomayor, M. Zhitomirskii. \emph{Impasse singularities of differential systems of the form $A(x)x' = F(x)$}. J. Differ. Equations 169 (2001), 567-587.

\bibitem{Zhito} M. Zhitomirskii. \emph{Local normal forms for constrained systems on 2-manifolds}. Bol. Soc. Brasil. Mat. 24 (1993), 211-232.

\end{thebibliography}


\end{document}